\let\OLDthebibliography\thebibliography
\renewcommand\thebibliography[1]{
  \OLDthebibliography{#1}
  \small
  \setlength{\parskip}{0pt}
  \setlength{\itemsep}{1.5pt plus 0.3ex}
}
 \theoremstyle{plain}
 \newtheorem{thm}{Theorem}[section]
  \theoremstyle{definition}
  \newtheorem{defn}[thm]{Definition}
  \theoremstyle{definition}
  \theoremstyle{plain}
  \newtheorem{lem}[thm]{Lemma}
  \theoremstyle{plain}
  \newtheorem{cor}[thm]{Corollary}
  \theoremstyle{remark}
  \newtheorem{rem}[thm]{Remark}
  \theoremstyle{plain}
  \newtheorem{prop}[thm]{Proposition}
  \theoremstyle{plain}
  \theoremstyle{remark}
  \theoremstyle{remark}
  \theoremstyle{plain}
  \theoremstyle{definition}
\DeclareMathOperator{\PP}{PP}
\DeclareMathOperator{\Hom}{Hom}
\DeclareMathOperator{\Spec}{Spec}
\DeclareMathOperator{\Trop}{Trop}
\DeclareMathOperator{\trop}{trop}
\DeclareMathOperator{\Star}{Star}
\DeclareMathOperator{\cone}{cone}
\DeclareMathOperator{\relint}{relint}
\DeclareMathOperator{\ini}{in}
\DeclareMathOperator{\codim}{codim}
\DeclareMathOperator{\ord}{ord}
\DeclareMathOperator{\divv}{div}
\DeclareMathOperator{\Lin}{Lin}
\DeclareMathOperator{\id}{id}
\DeclareMathOperator{\B}{B}
\newcommand{\mB}{\mathcal B} 
\newcommand{\nr}{N_{\mathds R}}
\renewcommand{\phi}{\varphi}
\let\oldoverline\overline
\renewcommand\overline[1]{\oldoverline{\mkern-1mu #1}}
\title{\large Intersection Theory on Linear Subvarieties of Toric Varieties}
\author{\normalsize Andreas Gross}
\date{}
\begin{document}

\maketitle
\vspace{-1.2cm}

\begin{abstract}
We give a complete description of the cohomology ring $A^*(\overline Z)$ of a compactification of a linear subvariety $Z$ of a torus in a smooth toric variety whose fan $\Sigma$ is supported on the tropicalization of $Z$. It turns out that cocycles on $\overline Z$ canonically correspond to Minkowski weights on $\Sigma$ and that the cup product is described by the intersection product on the tropical matroid variety $\Trop(Z)$.
\end{abstract}

\section{Introduction}

In \cite{FS97}, Fulton and Sturmfels show that the cohomology ring of a complete toric variety $X_\Sigma$ is isomorphic to the ring of Minkowski weights on the corresponding fan $\Sigma$. The product of two Minkowski weights was described by the so-called ``fan displacement rule''. Later it was shown in \cite{Katz12} and \cite{Rau08} that the product described by this rule is equal to the intersection product in tropical geometry introduced in \cite{AR10}. Here, we prove an analogous result about closures of linear subvarieties of tori in toric varieties. Let $T$ be a torus over an algebraically closed field $K$ with trivial valuation, and let $M$ be its character lattice. By linear subvarieties of $T$ we mean those varieties which are cut out by an ideal $I\unlhd K[M]$ which is linear in $K[\mathds Z^n]$ in the canonical sense after choosing an appropriate isomorphism $K[M]\cong K[\mathds Z^n]$. We compactify $Z$ in a smooth toric variety $X=X_\Sigma$ whose fan has support $|\Sigma|=|\Trop(Z)|$. This restriction on $\Sigma$ is made to ensure that $\overline Z$ intersects the torus orbits of $X$ properly. Our main theorem states that in this situation  the cohomology ring $A^*(\overline Z)$ can be described by the group $M^*(\Sigma)$ of Minkowski weights on $\Sigma$. Furthermore, the multiplication is induced by the intersection product on the tropical matroid variety $\Trop(Z)$, which has been introduced in \cite{FR10} and \cite{Sh13}. 

\begin{thm}
\label{iThm:I_Z is ring isomorphism}
Let $Z$ be a linear subvariety of the torus $T$ and let $\Sigma$ be a unimodular fan with $|\Sigma|=|\Trop(Z)|$. 
Then there is a canonical ring isomorphism $\mathcal I_Z:A^*(\overline Z)\rightarrow M^*(\Sigma)$, where the ring structure on $M^*(\Sigma)$ is induced by the tropical intersection product on $\Trop(Z)$.
\end{thm}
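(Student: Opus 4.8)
The plan is to identify $\mathcal I_Z$ first as an isomorphism of groups and then to verify multiplicativity by reducing everything to intersections with boundary divisors, where it becomes a local comparison of algebraic and tropical divisor pullbacks. For the group statement, recall that $Z$, being linear, is smooth and sch\"on, so that $\overline Z$ is smooth and proper inside the unimodular $X_\Sigma$, each stratum $\overline Z\cap O(\sigma)$ is transverse to $O(\sigma)$ and is again a linear subvariety of that torus with $\dim(\overline Z\cap O(\sigma))=\dim Z-\dim\sigma$, and $\Trop(\overline{\overline Z\cap O(\sigma)})$ is the subfan $\bigcup_{\tau\succeq\sigma}\tau$ of $\Sigma$ with all weights $1$. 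I would set
\[
 \mathcal I_Z(c)(\sigma):=\deg\bigl(c\cap[\overline{\overline Z\cap O(\sigma)}]\bigr),\qquad c\in A^k(\overline Z),\ \dim\sigma=\dim Z-k,
\]
which is legitimate because $\overline Z$ is complete. That $\mathcal I_Z(c)$ is balanced comes from evaluating $c$ on the rational equivalence $\divv\bigl(\chi^m|_{\overline{\overline Z\cap O(\tau)}}\bigr)=0$ for a codimension-$(k+1)$ cone $\tau$ and a character $m$ and applying the projection formula: the identity obtained is precisely the balancing condition at $\tau$. Bijectivity rests on two structural facts about $\overline Z$, which I would establish (or cite) on their own: since $\overline Z$ is stratified by linear subvarieties of tori, Kronecker duality $A^k(\overline Z)=\Hom(A_k(\overline Z),\mathbb Z)$ holds and $A_k(\overline Z)$ is generated by the classes $[\overline{\overline Z\cap O(\sigma)}]$; and the relations among these generators are generated by the monomial relations $\divv(\chi^m|_{\overline{\overline Z\cap O(\tau)}})$. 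Granting this, $\Hom(A_k(\overline Z),\mathbb Z)$ is exactly the group of balanced weights on the $(\dim Z-k)$-dimensional cones of $\Sigma$, that is $M^k(\Sigma)$, and $\mathcal I_Z$ is this identification.

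For multiplicativity I would reduce to boundary divisors. The ring $A^*(\overline Z)$ is generated in degree $1$ by the classes $[\overline Z\cap D_\rho]$, $\rho\in\Sigma^{(1)}$: indeed $\Pic(Z)=0$, so $\Pic(X_\Sigma)\to A^1(\overline Z)$ is surjective, and $A^*(\overline Z)$ is generated in degree $1$ — the analogue for $\overline Z$ of the Feichtner--Yuzvinsky presentation of a matroid Chow ring. Dually, the tropical intersection product on $\Trop(Z)$ is, through the Cartier/Weil dictionary on the matroid fan, determined by the divisor operations $\psi_\rho\cdot(-)$ — multiplication by $\psi_\rho\cdot[\Trop(Z)]$ in the cycle ring is that operation, and the classes $\psi_\rho\cdot[\Trop(Z)]$ generate the ring — where $\psi_\rho$ is the function on $\Sigma$ that is linear on each cone and takes the value $\delta_{\rho\rho'}$ on each primitive ray generator $u_{\rho'}$. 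It therefore suffices to prove the single identity
\[
 \mathcal I_Z\bigl([\overline Z\cap D_\rho]\cup c\bigr)=\psi_\rho\cdot\mathcal I_Z(c)\qquad\text{for all }c\in A^*(\overline Z),\ \rho\in\Sigma^{(1)},
\]
(its $c=1$ instance being $\mathcal I_Z([\overline Z\cap D_\rho])=\psi_\rho\cdot[\Trop(Z)]$) and then to iterate, using associativity and commutativity of the tropical product.

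Since the $[\overline{\overline Z\cap O(\sigma)}]$ span $A_*(\overline Z)$ and are automatically transverse to all torus orbits, it is enough to check the displayed identity when $c\cap[\overline Z]=[\overline{\overline Z\cap O(\sigma)}]$; equivalently, one must show that passing to tropical cycles intertwines the algebraic operation $\cap(\overline Z\cap D_\rho)$ with the tropical operation $\psi_\rho\cdot(-)$, after which the identity follows because, cone by cone, $\mathcal I_Z(c)$ is the weight function of $\trop(c\cap[\overline Z])$. Concretely: in an affine chart $U_\tau$ of $X_\Sigma$ with local equation $f_\rho$ for $D_\rho$, the Weil divisor $\divv\bigl(f_\rho|_{\overline{\overline Z\cap O(\sigma)}}\bigr)$ has, along a codimension-one substratum lying in some $O(\tau')$, multiplicity equal to the order of vanishing of $f_\rho$ there; by the toric--tropical dictionary — and here the constant-valuation and linearity hypotheses are used, since they force $\overline Z$ to look locally like a product of a linear variety with an affine toric variety — this order equals the slope change of $\psi_\rho$ across the corresponding codimension-one cone $\tau'$ of $\Trop(\overline{\overline Z\cap O(\sigma)})$, which is exactly the weight $\psi_\rho\cdot\Trop(\overline{\overline Z\cap O(\sigma)})$ assigns to $\tau'$. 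I expect this local multiplicity computation — controlling the structure of $\overline Z$ along the deeper toric strata finely enough to read off the intersection multiplicities and to match them with the tropical slope data — to be the principal obstacle, together with the Chow-theoretic fact that the monomial relations exhaust $A_*(\overline Z)$; everything else is formal manipulation with the projection formula and the Cartier/Weil dictionary.
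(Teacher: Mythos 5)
Your construction of $\mathcal I_Z$, the verification that its values are balanced via the divisors of the characters $\chi^m$ on the strata closures, and the appeal to Totaro's Kronecker duality for varieties stratified by linear subvarieties of tori all coincide with the paper's argument; up to that point (i.e., through injectivity of $\mathcal I_Z$) your proposal is sound. The difficulty is that two of your load-bearing claims are exactly statements that the paper can only derive \emph{as consequences} of the theorem, and you give no independent proof of either. For surjectivity you assert that the relations among the generators $[\overline{\overline Z\cap O(\sigma)}]$ of $A_k(\overline Z)$ are generated by the monomial relations, so that $\Hom(A_k(\overline Z),\mathds Z)=M^k(\Sigma)$. The paper explicitly flags this as unknown ("we do not know whether or not all relations are of this form") and establishes it only rationally, in Corollary \ref{mCor:Chow groups of Z modulo torsion}, \emph{after} the main theorem. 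What you get for free is only an injection $\Hom(A_k(\overline Z),\mathds Z)\hookrightarrow M^k(\Sigma)$: to show that a given Minkowski weight descends to a functional on $A_k(\overline Z)$ you must show it annihilates \emph{every} relation, not just the monomial ones. The paper's substitute for this is the combination of Theorem \ref{mThm:Pullback is Tropicalization} (the square relating $i^*\colon A^*(X_\Delta)\to A^*(\overline Z)$ to $\_\cdot\Trop(Z)\colon M^*(\Delta)\to M^*(\Sigma)$ for a complete unimodular $\Delta\supseteq\Sigma$) with Theorem \ref{tThm:Intersection with Tropicalization is surjective}, whose proof requires genuinely new input: the Gysin push-forward and Poincar\'e duality on a smooth complete toric variety. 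Your proposal contains no replacement for that step.

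The multiplicativity argument has the same circularity. Your reduction to boundary divisors needs $A^*(\overline Z)$ to be generated in degree one by the classes $[\overline Z\cap D_\rho]$. For the operational Chow ring of $\overline Z$ (for an \emph{arbitrary} unimodular $\Sigma$ supported on $\Trop(Z)$, not just the Bergman fan) this is again a consequence of the theorem --- it follows from the surjectivity of $i^*\colon A^*(X_\Delta)\to A^*(\overline Z)$ --- and cannot simply be imported from the Feichtner--Yuzvinsky presentation, which describes a combinatorially defined ring rather than $A^*(\overline Z)$; likewise "$\Pic(Z)=0$, hence $\Pic(X_\Sigma)\to A^1(\overline Z)$ is surjective" conflates the operational group $A^1$ with a divisor class group and is not justified as stated. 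In the paper, multiplicativity costs nothing extra: once the square above is known to commute, with a ring homomorphism on the left and a surjective ring homomorphism on the right, and $\mathcal I_T$ an isomorphism by Fulton--Sturmfels, the injective map $\mathcal I_Z$ is automatically a ring isomorphism. If you want to rescue your divisor-by-divisor computation you would first have to prove degree-one generation and the exhaustion of relations independently, and each of these appears to be as hard as the theorem itself.
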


Compactifications of linear subvarieties of tori have usually been studied in the context of hyperplane arrangements. After an appropriate choice of coordinates any linear subvariety $Z$
of a torus $T$ is equal to the intersection of a linear subspace $V$ of some projective space $\mathds P^n$
that is not contained in any coordinate hyperplane with the big open torus $\mathds G^{n+1}_m/\mathds G_m$. The
choice of a linear parametrization $\mathds P^r \cong V$ identifies $Z$ with the complement of the hyperplane arrangement in $\mathds P^r$ consisting of the preimages of the coordinate hyperplanes of $\mathds P^n$.
Constructing well-behaved compactifications of complements of hyperplane arrangements
is a standard problem of algebraic geometry. A very well-known solution has been given by
De Concini and Procesi in \cite{DP95}, who associate \emph{wonderful} compactifications $\overline Z_{\mathcal G}$ to so-called
building sets $\mathcal G$ in the intersection lattice of the arrangement. They also succeeded in giving a
combinatorial description of the cohomology of these compactifications, which has been further simplified in \cite{FY04}. Another approach to compactify Z was introduced by Tevelev in \cite{Tev07}.
For connected hyperplane arrangements he introduced the tropical compactification $\overline Z _{\trop}$,
which has further been studied in \cite{HKT06}. Their results proved that complements of connected
hyperplane arrangements are what is called \emph{hübsch} in \cite{HKT09}, a very desirable property that
implies, among other things, that the closure $\overline Z$ in any toric variety whose fan is supported
on $|\Trop(Z)|$ intersects the torus orbits smoothly.

This paper is organized as follows. We begin Section \ref{tSec} by reviewing the basic constructions
of the tropical theory. Afterwards, we look at Minkowski weights in greater detail. The main
result of this section is of technical nature, but will be very useful in the proof of Theorem
\ref{iThm:I_Z is ring isomorphism}. For its statement let $\Sigma$ be a complete unimodular fan in $\mathds R^n$ which has a subfan $\widetilde\Sigma \subseteq \Sigma$
that defines a matroid variety $A$ after assigning weight $1$ to all its maximal cones. Writing
$Z^*(\mathds R^n)$ and $Z^*(A)$ for the graded rings of tropical fan cycles in $\mathds R^n$ and $A$, respectively,
we get a map $Z^*(\mathds R^n) \to Z^*(A)$ which assigns to $C \in Z^*(\mathds R^n)$ the intersection product
$C \cdot A$ in $\mathds R^n$. Using piecewise polynomials, it easily follows from the results of \cite{F11,KP08} that
this map is a ring epimorphism. But as we are interested in Minkowski weights, the question
arises if this still holds if we replace $Z^*(\mathds R^n)$ by $M^*(\Sigma)$ and $Z^*(A)$ by $M^*(\widetilde \Sigma)$. Theorem \ref{tThm:Intersection with Tropicalization is surjective} provides an affirmative answer to this. As a direct consequence we obtain a new proof of the
fact that $M^*(\widetilde\Sigma)$ is a subring of $Z^*(A)$. To our knowledge, this result has first been proven in
\cite{Sh13} using tropical modifications, whereas our proof uses the interplay between intersection
theory on toric varieties and tropical intersection theory.

In Section \ref{mSec} we start with the examination of compactifications of linear subvarieties of a
torus $T$. Given a linear subvariety $Z \subseteq T$, we will only consider compactifications in smooth
toric varieties such that $\overline Z$ is proper and intersects all torus orbits properly. As shown in \cite{Gub12},
this happens if and only if the fan $\Sigma$ corresponding to the toric variety has a subfan  $\widetilde\Sigma$ which
is supported on $\Trop(Z)$. Note that in contrast to the statement of Theorem \ref{iThm:I_Z is ring isomorphism}, this condition
allows fans with support strictly larger than that of $\Trop(Z)$. This gives us more flexibility
in the choice of the ambient toric variety: in particular, we can choose it to be complete.
However, the closure of $Z$	 does not meet the torus orbits corresponding to cones in $\Sigma$ that
are not contained in $\widetilde\Sigma$, so $\overline Z$ only depends on $\widetilde \Sigma$. Hence allowing larger fans does not make
the statement more general.

Now given $\Sigma$, we will show that the orbit structure $X_\Sigma = \bigcup_{\sigma\in\Sigma} O(\sigma)$ induces a stratification $\overline Z = \bigcup_{\sigma\in\Sigma} \overline Z \cap O(\sigma)$ and that each stratum $\overline Z \cap O(\sigma )$ is a linear subvariety of
the torus $O(\sigma)$. We use this stratification to define the morphism $\mathcal I_Z\colon A^k(\overline Z) \to M^k(\Sigma)$: a
cocycle $c \in A^k(\overline Z)$ is mapped to the Minkowski weight $\sigma \mapsto \deg(c \cap [\overline Z \cap V (\sigma )])$, where
$V(\sigma)$ denotes the closure of the torus orbit $O(\sigma)$. The morphism $\mathcal I_Z$ is analogous to the one
constructed by Fulton and Sturmfels, and, in fact, we recover their isomorphism if we set
$Z = T$. We will prove injectivity and surjectivity of $\mathcal I_Z$ separately. For injectivity the crucial
point is that the Kronecker duality homomorphism $\mathcal D_Z\colon A^k(\overline Z) \to \Hom(A^k(\overline Z), \mathds Z)$, which
assigns to $c \in A^k(\overline Z)$ the morphism $\alpha \mapsto \deg(c \cap \alpha)$, is an isomorphism. This is wrong for
general varieties, but Totaro succeeded to prove that it is in fact true for what he calls linear
schemes \cite{Tot95}. We will use Totaro's result by proving that $\overline Z$ is linear in his sense. To prove the
surjectivity of $\mathcal I_Z$, we embed $X_\Sigma$ in a complete toric variety $X_\Delta$ and show that the pull-back
map $A^*(X_\Delta) \to A^*(\overline Z)$ corresponds to intersection with the tropical cycle $\Trop(Z)$ on the
level of Minkowski weights. Then the statement follows from the fact that the resulting map
$M^*(\Delta) \to M^*(\Sigma)$ is onto, which was our main result of Section \ref{tSec}.

I would like to thank Andreas Gathmann for many helpful discussions and comments.

\section{Preliminaries}
\label{tSec}

In this section we want to recall the basic definitions and properties of the objects and
operations in tropical geometry needed in this paper. We also prove a lifting theorem for
Minkowski weights which is essential for our analysis of the intersection theory of linear
subvarieties of toric varieties in the next section, but may also be of independent interest.

\subsection{Tropical intersection theory}

The tropical objects occurring in this paper are tropical (fan) cycles in the sense of \cite{AR10} on the one hand and Minkowski weights on the other. Both objects will always live in the real vector space $\nr=N\otimes_{\mathds Z}\mathds R$ for a lattice $N$. \emph{Tropical cycles} are equivalence classes of certain \emph{weighted fans}, that is pure-dimensional fans with weights on their top-dimensional cones, where two weighted fans are called equivalent if and only if they have a common refinement which respects the weights.  Note that when speaking of a ``cone'' we actually mean a rational polyhedral cone. The equivalence classes we consider are exactly those of \emph{tropical fans}, which are those weighted fans satisfying the so-called \emph{balancing condition}. To explain the balancing condition we first need the concept of lattice normal vectors. Suppose that $\tau$ is a codimension one face of a cone $\sigma$. If we write $N_\delta=N\cap \Lin(\delta)$ for a cone $\delta\subseteq\nr$, then $N_\sigma/N_\tau$ is a one-dimensional lattice. The \emph{lattice normal vector} $u_{\sigma,\tau}$ of $\sigma$ with respect to $\tau$ is the unique generator of $N_\sigma/N_\tau$ which is contained in the image of $\sigma$ in $(N_\sigma/N_\tau)_{\mathds R}=\Lin(\sigma)/\Lin(\tau)$. Now if $\mathcal A=(\Sigma,\omega)$ is a weighted fan with underlying fan $\Sigma$ and weight function $\omega$, and for $k\in\mathds N$ we denote by $\Sigma^{(k)}$ the codimension $k$ cones of $\Sigma$ [that is the cones of dimension $\dim(\Sigma)-k$], then $\mathcal A$ satisfies the balancing condition if and only if we have
\begin{equation*}
\sum_{\mathclap{\sigma:\tau\leq\sigma\in\Sigma^{(0)}}}~~\omega(\sigma)u_{\sigma,\tau} =0 \text{ in }\nr/\Lin(\tau)
\end{equation*}
for every $\tau\in\Sigma^{(1)}$. The set of all tropical cycles of given codimension $k$ in $\nr$ can be given the structure of an Abelian group $Z^k(\nr)$ \cite[Lemma 2.14]{AR10}.

If $\Sigma$ is a fan, weighted fan or tropical cycle, then we can define its support $|\Sigma|$. For a fan this is just the union of all its cones. For a weighted fan it is the union of all maximal cones with nonzero weight. Finally, if $\Sigma$ is a tropical cycle, then its support is defined to be the support of any representative (this is easily seen to be well-defined). In any of these cases we can define the subgroup $Z^k(\Sigma)$ of $Z^{k+\codim(\Sigma,\nr)}(\nr)$ consisting only of those cycles whose support is contained in that of $\Sigma$.

An important operation on fans is that of taking stars. For any cone $\sigma\subseteq\nr$ define $N(\sigma)=N/N_\sigma$. If $\sigma$ is a cone of a fan $\Sigma$, we define the \emph{star}  of $\Sigma$ at $\sigma$ to be the fan $\Star_\Sigma(\sigma)$ in $N(\sigma)_{\mathds R}$ having as cones the images of the cones of $\Sigma$ containing $\sigma$ under the projection map $\nr\rightarrow N(\sigma)_{\mathds R}$. In case there are weights defined on $\Sigma$ we equip $\Star_\Sigma(\sigma)$ with the induced weights. As the projection respects lattice normal vectors, the star will be tropical if $\Sigma$ is.

The class of tropical cycles of greatest interest in this paper is that of matroid varieties. For every loopfree matroid $\mathcal M$ on $E=\{1,\dots, n\}$ there is a pure-dimensional fan $\mB(\mathcal M)$ in $\mathds R^n=\mathds Z^n\otimes_{\mathds Z}\mathds R$. Its cones are of the form 
\begin{equation*}
\langle \mathcal F\rangle =\cone\{v_{F_1},\dots, v_{F_{k-1}}, v_E, -v_E\},
\end{equation*}
where $\mathcal F=\{\emptyset \subsetneq F_1 \subsetneq\dots\subsetneq F_k=E\}$ is a chain of flats, $v_F=-\sum_{f\in F}e_f$, and $e_1,\dots, e_n$ denotes the standard basis of $\mathds R^n$. Assigning weight $1$ to all maximal cones of $\mB(\mathcal M)$ we obtain a tropical fan. These fans always have lineality space $\Lin\{v_E\}$. As we also want to consider fans without lineality space we define a \emph{matroidal fan} to be any tropical fan isomorphic to $\mathcal B(\mathcal M)/\Lin\{v_E\}$ for some loopfree matroid $\mathcal M$. Here, by an isomorphism of two fans $\mathcal A\subseteq\nr$ and $\mathcal B\subseteq N'_{\mathds R}$ we mean a linear map $\nr\rightarrow N'_{\mathds R}$ which maps $|\mathcal A|$ bijectively onto $|\mathcal B|$ in a way compatible with the weights and which is induced by a lattice morphism $N\rightarrow N'$ that maps the lattice $N_{\mathcal A}=\Lin|\mathcal A|\cap N$ isomorphically onto $N'_{\mathcal B}=\Lin|\mathcal B|\cap N'$. \emph{Tropical matroid varieties} are then defined to be exactly those tropical cycles which are associated to matroidal fans.

Taking for the matroid $\mathcal M$ the uniform matroid of rank $n$ on $n$ elements yields a complete fan $\mB(\mathcal M)$. In particular, $\mathds R^n$, or more generally $\nr$ for any lattice $N$, equipped  with the trivial weight $1$, is a matroid variety.

Matroid varieties play an important role in tropical intersection theory because they allow
intersection products. That is, for each matroid variety $\B$, the graded Abelian group $Z^*(\B)=\bigoplus_{k\in\mathds N}Z^k(\B)$ can be given the structure of a graded commutative ring with unity $\B$, which also has the property that the support of a product $C\cdot D$ is contained in $|C|\cap |D|$ \cite[Thm. 4.5]{FR10}, \cite[Prop. 3.13]{Sh13}.

Intersection products on matroid varieties also respect intersections with piecewise polynomials, which are the tropical analogues of equivariant cocycles on toric varieties (see Section \ref{subsec:Minkowski weights}). A \emph{piecewise polynomial} of degree $k$ on a cycle $A$ is a continuous function $\phi:|A|\rightarrow\mathds R$ for which there exists a fan structure $\Sigma$ on $|A|$ such that $\phi$ is given by a homogeneous polynomial of degree $k$ with integer coefficients on every cone of $\Sigma$. The Abelian group of piecewise polynomials of degree $k$ on $A$ is denoted by $\PP^k(A)$. Allowing also polynomials with mixed degrees we obtain a graded ring $\PP^*(A)=\bigoplus_{k\in\mathds N}\PP^k(A)$. In \cite[Prop. 2.24]{F11} it is shown that $Z^*(A)$ can be given the structure of a graded $\PP^*(A)$-module, making $Z^*(A)$ a $\PP^*(A)$-algebra in case $A$ is a matroid variety. Given two cycles $A$, $B\in Z^*(\nr)$ with $|A|\subseteq |B|$, there is a natural restriction morphism $\PP^*(B)\rightarrow \PP^*(A)$. This induces a $\PP^*(B)$-module structure on $Z^*(A)$ and with this structure, the inclusion $Z^*(A)\rightarrow Z^*(B)$ [note that this is a graded morphism of degree $\codim(A,B)$] is a morphism of $\PP^*(B)$-modules.

\subsection{Minkowski weights}
\label{subsec:Minkowski weights}

Minkowski weights have been introduced by Fulton and Sturmfels in \cite{FS97} to describe the
Chow cohomology ring of a complete toric variety. They differ from cycles in that they do not allow refinements. To be more precise, given a pure-dimensional fan $\Sigma$, the set of codimension $k$ Minkowski weights $M^k(\Sigma)$ is defined as the set of weight functions $c:\Sigma^{(k)}\rightarrow\mathds Z$ making the codimension $k$ skeleton of $\Sigma$ a tropical fan. This is easily seen to be a subgroup of the group of maps $\Sigma^{(k)}\rightarrow \mathds Z$.
We will give a short presentation of the connection of these
weights to the intersection theory of toric varieties. First, we assume that $\Sigma$ is complete, in which case the associated toric variety $X = X(\Sigma)$ is complete as well. Then for every $k\in\mathds N$ there is the Kronecker duality homomorphism
\begin{equation*}
\mathcal D\colon A^k(X) \to \Hom(A_k(X), \mathds Z),\;\; c \mapsto (\alpha \mapsto \deg(c \cap \alpha)),
\end{equation*}
which is an isomorphism by \cite[Thm. 3]{FMSS95}, \cite[Thm. 2]{Tot95}. By the proposition on page 96 in \cite{F89} the Chow group $A_k(X)$ is generated by the cycle classes $[V(\sigma)]$ of the closures of the orbits corresponding to the codimension $k$ cones $\sigma \in \Sigma^{(k)}$. This implies that a morphism $c \in \Hom(A_k(X), \mathds Z)$ is uniquely determined the images $c([V(\sigma)])$. Using this we obtain a monomorphism
\begin{equation*}
\mathcal I\colon A^k(X) \to \mathds Z^{\Sigma^{(k)}},\;\; c \mapsto (\deg(c \cap [V(\sigma)]))_{\sigma\in\Sigma^{(k)}}
\end{equation*}
from $A^k(X)$ into the group of maps $\Sigma^{(k)} \to \mathds Z$. A description of the image of $\mathcal I$ amounts to determining the relations of the cycle classes $[V(\sigma)]$ in $A_k(X)$. These are given by the divisors of the characters of the torus orbits $O(\sigma)$ for $\sigma\in\Sigma$. Namely, the relations in $A_k(X)$ are given by
\begin{equation*}
0 = \divv(\chi^m) = \sum_{\sigma:\tau\leq\sigma\in\Sigma^{(k)}}\langle m, u_{\sigma,\tau}\rangle [V(\sigma)]\quad\text{in }A_k(X),
\end{equation*}
where $\tau\in\Sigma^{(k+1)}$ and $m \in M(\tau) = M \cap \tau^\perp$ \cite[Prop. 1.1]{FS97}. These relations are dual to the balancing condition. Therefore, $\mathcal I$ induces an isomorphism of $A^*(X)$ and $M^*(\Sigma)$.
The cup-product on $A^*(X)$ induces a multiplication on $M^*(\Sigma)$ which can be described
combinatorially by the ``fan displacement rule'' \cite[Prop.3.1, Thm.3.2]{FS97}.Because Minkowski
weights satisfy the balancing condition by definition, there is a canonical inclusion map
$M^*(\Sigma) \hookrightarrow Z^*(\Sigma), c \mapsto [c]$ for arbitrary fans $\Sigma$. In \cite[Thm. 4.4]{Katz12}  and \cite[Thm. 1.9]{Rau08} it has
been shown that if $\Sigma$ is complete, this map is in fact a ring homomorphism. Thus $M^*(\Sigma)$ can be considered as a subring of $Z^*(\nr)$ in this case, giving an algebraic interpretation of tropical intersection theory.

A connection like this exists for intersections with piecewise polynomials as well. For
a complete fan $\Sigma$ denote by $\PP^*(\Sigma)$ the subring of $\PP^*(\nr)$ consisting of all piecewise polynomials which restrict to polynomials with integer coefficients on the cones of $\Sigma$. It has been shown in \cite[Thm.\ 5.4]{Br97} for $\Sigma$ unimodular and in \cite[Thm. 1]{P06} for general $\Sigma$ (both
results do not need $\Sigma$ to be complete) that there is a canonical isomorphism $\PP^*(\Sigma) \to A^*_T(X)$ between piecewise polynomials and equivariant cocycles on $X$. The composite $\eta^*\colon \PP^*(\Sigma) \to A^*(X)$ of this isomorphism with the canonical morphism $A^*_T(X) \to A^*(X)$ has been described combinatorially in \cite{KP08}. The intersection product of tropical cycles with piecewise polynomials introduced in \cite{F11} has been modeled to be compatible with $\eta^*$. More
precisely, if $A \in Z^*(\nr)$ is represented by a Minkowski weight $c \in M^*(\Sigma) = A^*(X)$, and $\phi \in \PP^*(\Sigma)$, then
\begin{equation*}
\phi\cdot A=[\eta^*\phi\cup c].
\end{equation*}

\subsection{Initial ideals and tropicalization}
\label{subsec:initial ideals}

Let $f \in K[M]$ and $u \in \nr$. The \emph{initial form} $\ini_u(f)$ of $f$ with respect to $u$ is the sum of all terms of $f$ corresponding to monomials with minimal $u$-weight. That is, if we write
$f = \sum_i a_i \chi^{m_i}$ as a finite sum with distinct $m_i$ and all $a_i$ nonzero, then $\ini_u(f)$ is the sum of all terms $a_i \chi^{m_i}$ for which $\langle m_i, u\rangle$ is minimal. Let $I \unlhd K[M]$ be an ideal. Then the \emph{initial ideal} $\ini_u(I)$ of $I$ is defined by $\ini_u(I) = \langle \ini_u(f) \mid f \in I\rangle$.

Now let $Z$ be a pure-dimensional subscheme of the algebraic torus $T = \Spec K[M]$ with
corresponding ideal $I \unlhd K[M]$. Following \cite{TropBook} we define the (set-theoretic) \emph{tropicalization} of $Z$ by 
\begin{equation*}
\trop(Z) = \{u \in \nr \mid \ini_u(I) \neq K[M]\}.
\end{equation*}
The tropicalization $\trop(Z)$ can be written as the support of a purely $\dim(Z)$-dimensional fan
$\Sigma$ \cite[Thm.\ 3.3.8]{TropBook}. This fan can be chosen in such a way that for all maximal cones $\sigma\in\Sigma$ the initial ideals $\ini_u(I)$ are independent of the choice of $u\in\relint(\sigma)$ \cite[Prop. 3.2.8]{TropBook}. So we get a well-defined initial ideal $\ini_\sigma(I)$. Assigning to a maximal cone $\sigma \in\Sigma$ the sum of the multiplicities of the irreducible components of $\Spec(K[M]/ \ini_\sigma(I))$ we obtain a balanced
weighted fan \cite[Thm.\ 3.4.14]{TropBook}. It clearly depends on the choice of $\Sigma$, but its tropical cycle is well-defined and we call it the tropicalization $\Trop(Z)$ of $Z$.

Tropicalization of subvarieties of tori over fields with trivial valuation is closely related to
the intersection theory of toric varieties: Given the subvariety $Z$ of $T$, there exists a unimodular fan $\Sigma$ such that the closure $\overline Z$ in the toric variety $X = X(\Sigma)$ intersects all torus orbits in the expected dimension \cite[Thm. 1.2]{Tev07}. By Poincaré duality \cite[Cor.\ 17.4]{F98} there exits a unique cocycle $c \in A^*(X)$ such that $c \cap [X] = [\overline Z]$. This cycle corresponds to a Minkowski weight on $\Sigma$ whose corresponding tropical cycle is equal to $\Trop(Z)$ \cite[Lemma 3.2]{ST08}, \cite[Lemma 2.3]{KP11}. 

In the case when $T = \mathds G^n_m \subseteq \mathds A^n$ and $Z$ is given by a set of linear equations, that is $Z = \{x \in T \mid Ax + b = 0\}$ for some $m \times n$ matrix $A$ and some vector $b \in K^m$, the tropicalization of $Z$ can be described combinatorially: Let $B$ be a matrix whose rows span the kernel of $(A | b)$. Then the tropicalization of $Z$ is given by the matroid variety associated to the matroid represented by $B$ \cite[Section 4.1]{TropBook}.

\subsection{Lifting Minkowski weights}

Now we want to prove an adaption to the world of Minkowski weights of the following lifting
property of cycles in a matroid variety to a larger matroid variety.

\begin{lem}
\label{tLem:Intersection is Ring homomorphism}
Let $A$, $B\in Z^*(\nr)$ be matroid varieties with $|A|\subseteq |B|$, then the map
\begin{align*}
\phi\colon Z^*(B)\rightarrow Z^*(A),\;\;C\mapsto C\cdot A,
\end{align*}
which maps a cycle $C$ to its intersection product in $B$ with $A$, is a surjective $\PP^*(B)$-algebra homomorphism.
\end{lem}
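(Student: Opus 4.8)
The plan is to verify the three claims—that $\phi$ is a ring homomorphism, that it is a $\PP^*(B)$-algebra homomorphism, and that it is surjective—in that order, with the surjectivity being the main point.

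First I would record that $\phi$ is additive and degree-preserving, since the intersection product $C\mapsto C\cdot A$ is bilinear and $\codim(C\cdot A, A)=\codim(C,B)$ for $C\in Z^*(B)$; here I use that $A$ and $B$ are matroid varieties, so the ambient intersection products exist by the results of \cite{FR10} and \cite{Sh13} cited above. For multiplicativity, the key identity is associativity and commutativity of the intersection product on the matroid variety $B$ together with the fact that $A=B\cdot A'$ for a suitable $A'$—or more directly, that $\phi(C)\cdot_A\phi(D) = (C\cdot_B A)\cdot_A (D\cdot_B A)$ should equal $(C\cdot_B D)\cdot_B A = \phi(C\cdot_B D)$. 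To make this precise one uses that intersecting with $A$ inside $B$ can be computed as intersecting with $A$ inside $A$ after first intersecting with $A$: i.e. the diagonal/projection formula for matroidal intersection products gives $(C\cdot_B A)\cdot_A (D\cdot_B A) = C\cdot_B D\cdot_B A$, using $|A|\subseteq|B|$ so that restriction of cycles is compatible. That $\phi(B)=B\cdot A = A$ is the unit condition. Compatibility with the $\PP^*(B)$-module structures is the statement that $\phi(\varphi\cdot C) = \varphi\cdot\phi(C)$ for $\varphi\in\PP^*(B)$; this follows from the fact, quoted from \cite{F11} in the paragraph preceding the lemma, that the intersection product on a matroid variety is $\PP^*$-linear, combined with the observation that the restriction $\PP^*(B)\to\PP^*(A)$ is exactly what defines the $\PP^*(B)$-action on $Z^*(A)$.

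The substantive part is surjectivity. The idea is that $Z^*(A)$ is generated as a ring—indeed already as a $\PP^*(A)$-module—by the fundamental cycle $A$, via the $\PP^*(A)$-algebra structure established in \cite[Prop. 2.24]{F11}: every cycle in $Z^*(A)$ is of the form $\varphi\cdot A$ for some $\varphi\in\PP^*(A)$, because $A$ is a matroid variety. Now take any $D\in Z^k(A)$ and write $D=\varphi\cdot A$ with $\varphi\in\PP^k(A)$. Since $|A|\subseteq|B|$ and $B$ is itself a matroid variety (so the piecewise polynomials on $B$ surject onto those on $A$ under restriction—this is the standard fact that a piecewise polynomial on a subfan extends, using that one may choose coefficients freely cone by cone after refining), there exists $\widetilde\varphi\in\PP^k(B)$ restricting to $\varphi$. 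Put $C=\widetilde\varphi\cdot B\in Z^k(B)$. Then $\phi(C)=(\widetilde\varphi\cdot B)\cdot_B A = \widetilde\varphi\cdot A = (\widetilde\varphi|_{|A|})\cdot A = \varphi\cdot A = D$, where the second equality is associativity of the $\PP^*(B)$-module structure (intersecting with $B$ first, then with $A$) and the third is the compatibility of the two module structures along the restriction map. Hence $\phi$ is onto.

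I expect the main obstacle to be the surjectivity argument, specifically justifying the two facts it rests on: that the intersection product on a matroid variety is computed by intersecting with a piecewise polynomial (so that $Z^*(A)$ is cyclic as a $\PP^*(A)$-module, generated by $A$), and that every piecewise polynomial on the sub-matroid-variety $A$ lifts to a piecewise polynomial on $B$. The first is \cite[Prop. 2.24]{F11} essentially verbatim; the second requires a small argument—one refines a common fan structure on $|A|\subseteq|B|$ and extends the polynomial data from the cones lying in $|A|$ to all cones of $B$, which can be done since there is no compatibility constraint beyond continuity and one is free to prescribe the polynomial to be $0$ on cones not meeting $|A|$ in top dimension. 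Everything else (additivity, degree-count, the projection formula for the ring-homomorphism property) is formal manipulation with the already-cited structure on $Z^*$ of a matroid variety.
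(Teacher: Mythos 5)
Your proof follows essentially the same route as the paper's: surjectivity comes from writing every cycle in $Z^*(A)$ as $\varphi\cdot A$ with $\varphi\in\PP^*(A)$ and lifting $\varphi$ to $\PP^*(B)$, and the algebra-homomorphism property ultimately rests on both $Z^*(A)$ and $Z^*(B)$ being quotients of $\PP^*(B)$ together with $\phi(B)=A$. Two remarks. First, your justification for the surjectivity of the restriction $\PP^*(B)\rightarrow\PP^*(A)$ --- extending a piecewise polynomial by prescribing it to be $0$ on the cones not lying in $|A|$ --- does not work as stated: the resulting function is in general discontinuous along the locus where $|A|$ meets the rest of $|B|$. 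This lifting statement is exactly \cite[Prop.~2.8]{F11}, which is what the paper cites, so you should invoke it rather than argue by extension-by-zero. Second, your separate multiplicativity argument via a ``diagonal/projection formula'' for $(C\cdot_B A)\cdot_A(D\cdot_B A)$ is an unnecessary and not cleanly justified detour: once you have written $C=\widetilde\varphi\cdot B$ and $D=\widetilde\psi\cdot B$, multiplicativity is formal from the $\PP^*(B)$-module property of $\phi$ and the fact that the structure maps $\PP^*\rightarrow Z^*$ are surjective ring homomorphisms, which is precisely how the paper concludes.
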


\begin{proof}
The fact that $\phi$ is a morphism of $\PP^*(B)$-modules follows directly from the compatibility properties stated above.  To prove that $\phi$ also respects the ring structures we first note that the structure maps $\PP^*(A)\rightarrow Z^*(A)$ and $\PP^*(B)\rightarrow Z^*(B)$ are surjective by \cite[Remark 3.2]{F11}.  Also, as a direct consequence of \cite[Prop. 2.8]{F11}, the restriction morphism $\PP^*(B)\rightarrow\PP^*(A)$ is surjective. This shows that both, $Z^*(A)$ and $Z^*(B)$ are homomorphic images of $\PP^*(B)$. Together with the fact that the unity $B$ of $Z^*(B)$ is mapped to $A$, the unity of $Z^*(A)$, this proves the lemma.
\end{proof}

To make an analogous statement about Minkowski weights, we have to fix a fan structure
on $B$ that respects $A$. More precisely, we treat unimodular fans $\Sigma$ with support equal to $|B|$ having a subfan $\widetilde\Sigma$ with support equal to $|A|$ and replace $Z^*(B)$ by $M^*(\Sigma)$ and $Z^*(A)$ by $M^*(\widetilde\Sigma)$ in the lemma.  In the case of Minkowski weights it is advantageous to reduce to the case where $\Sigma$ is complete, because then we can use methods from algebraic intersection theory. That this reduction is possible follows from the fact that given three tropical cycles $A, B, C$ with $|A|\subseteq |B|\subseteq |C|$, such that $B$ and $C$ are matroid varieties  with intersection products $\cdot_B$ and $\cdot_C$, respectively, we have
\begin{equation*}
(D\cdot_C B)\cdot_B A=D\cdot_C A
\end{equation*}
for all cycles $D\in Z^*(C)$.

\begin{thm}
\label{tThm:Intersection with Tropicalization is surjective}
Let $A\in Z^*(\nr)$ be a matroid variety and let $\Sigma$ be a complete unimodular fan having a subfan $\widetilde\Sigma\subseteq\Sigma$ with $|\widetilde\Sigma|=|A|$. Then the map
\begin{align*}
M^*(\Sigma)\rightarrow M^*(\widetilde\Sigma),\;\; c\mapsto c\cdot A
\end{align*}
is surjective.
\end{thm}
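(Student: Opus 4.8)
The plan is to reduce the statement to Lemma~\ref{tLem:Intersection is Ring homomorphism} by an appropriate refinement argument, controlling the gap between tropical cycles and Minkowski weights via piecewise polynomials. Concretely, fix a Minkowski weight $d\in M^k(\widetilde\Sigma)$. Viewing $[d]\in Z^k(A)$, Lemma~\ref{tLem:Intersection is Ring homomorphism} (with $B=\nr$) produces a cycle $C\in Z^k(\nr)$ with $C\cdot A=[d]$; moreover, since the structure map $\PP^*(\nr)\to Z^*(\nr)$ is surjective, we may even assume $C=\phi\cdot[\nr]$ for some piecewise polynomial $\phi\in\PP^k(\nr)$. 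The obstruction is that $C$, or rather $\phi$, need not be compatible with the fan $\Sigma$: a priori $C$ is only represented on \emph{some} refinement of $\Sigma$, and a Minkowski weight on a refinement does not push forward to a Minkowski weight on $\Sigma$.

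To handle this, first I would observe that it suffices to realize $d$ as $c\cdot A$ for $c\in M^k(\Sigma')$ where $\Sigma'$ is \emph{any} complete unimodular fan refining $\Sigma$ \emph{whose restriction to $|A|$ still has a subfan equal to a fan structure on $|A|$} — because the Fulton–Sturmfels pushforward $M^*(\Sigma')\to M^*(\Sigma)$ along a refinement is surjective (indeed, by \cite{FS97} both compute $A^*$ of the respective complete toric varieties, and the refinement induces a proper birational pushforward which is a ring map in cohomology and is surjective on the top-degree Minkowski weights, hence in all degrees by Poincaré duality / the structure of $M^*$). Thus we are free to pass to a convenient refinement. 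Then, given the piecewise polynomial $\phi\in\PP^k(\nr)$ with $\phi\cdot[\nr]\cdot A=[d]$, choose $\Sigma'$ fine enough that $\phi$ is polynomial on each cone of $\Sigma'$; since $\phi\cdot[\nr]$ is represented by assigning to each codimension-$k$ cone the appropriate weight coming from the polynomial pieces, this is automatically a Minkowski weight $c'\in M^k(\Sigma')$ — here I would cite that $\PP^k$ surjects onto $M^k$ for complete fans, the toric analogue of \cite[Remark 3.2]{F11}, which holds because equivariant cocycles on a smooth complete toric variety are represented by piecewise polynomials (Payne). Finally, intersecting $c'$ with $A$: by the results of \cite{Katz12}, \cite{Rau08} the product $c'\cdot A$ computed via the fan displacement rule agrees with the tropical intersection product $[c']\cdot[A]=\phi\cdot[\nr]\cdot A=[d]$, and since $d\in M^k(\widetilde\Sigma)$ to begin with, $c'\cdot A=d$ as Minkowski weights on $\widetilde\Sigma$.

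Pushing $c'$ forward to $c\in M^k(\Sigma)$ along $\Sigma'\to\Sigma$, compatibility of the pushforward with intersecting by the fixed cycle $A$ (both being ``cap with $[A]$'' read through the two Fulton–Sturmfels isomorphisms, equivalently functoriality of the projection formula under the proper birational toric morphism) gives $c\cdot A = d$ in $M^k(\widetilde\Sigma)$, completing the surjectivity.

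\textbf{Expected main obstacle.} The delicate point is the bookkeeping in the refinement step: showing that the pushforward $M^*(\Sigma')\to M^*(\Sigma)$ is surjective \emph{and} commutes with ``$\,\cdot\,A$'', i.e.\ that the square relating $M^*(\Sigma')\to M^*(\widetilde\Sigma')$ and $M^*(\Sigma)\to M^*(\widetilde\Sigma)$ commutes. I expect this to follow cleanly from expressing everything through piecewise polynomials (a restriction map $\PP^*(\Sigma)\to\PP^*(\Sigma')$ whose composite with $\PP^*(\Sigma')\twoheadrightarrow M^*(\Sigma')$ recovers $\PP^*(\Sigma)\twoheadrightarrow M^*(\Sigma)$ up to the pushforward) together with the module-map properties already recorded before Lemma~\ref{tLem:Intersection is Ring homomorphism}; the alternative, purely combinatorial verification via the fan displacement rule would be more cumbersome. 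A secondary subtlety is ensuring that the chosen refinement $\Sigma'$ can be taken unimodular and complete simultaneously with being fine enough for $\phi$ — this is standard toric resolution of singularities applied to a common refinement of $\Sigma$ and a fan on which $\phi$ is piecewise polynomial, extended to a complete fan.
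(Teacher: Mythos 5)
Your proposal is correct and follows essentially the same route as the paper: produce a cycle with Lemma~\ref{tLem:Intersection is Ring homomorphism}, represent it by a Minkowski weight on a complete unimodular refinement $\Sigma'$ of $\Sigma$, and descend via the toric Gysin pushforward using the projection formula $\pi_*(c'\cup\pi^*s)=\pi_*(c')\cup s$ together with $\pi_*\pi^*=\mathrm{id}$. One small slip: the pushforward $M^*(\Sigma')\to M^*(\Sigma)$ is not a ring homomorphism, but its surjectivity (which is all you actually need) follows from $\pi_*[X_{\Sigma'}]=[X_\Sigma]$ and the projection formula --- exactly the computation you carry out in your final paragraph anyway.
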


\begin{proof}
After dividing by the lineality space of $\Sigma$ we can assume that $\Sigma$ consists of strongly convex cones. In this way, we ensure that the toric variety $X_\Sigma$ associated to $\Sigma$ has the correct dimension. Let $c\in M^k(\widetilde\Sigma)$. By Lemma \ref{tLem:Intersection is Ring homomorphism} there is a cycle $B\in Z^k(\nr)$ such that $[c]=B\cdot A$ (remember that $[c]$ denotes the tropical cycle associated to $c$). There is a complete fan $\Delta$ refining $\Sigma$, such that $B=[b]$ for a Minkowski weight $b\in M^k(\Delta)$. Let $\pi:X_\Delta\rightarrow X_\Sigma$ be the toric morphism induced by the identity on $N$, and let $s$ be the Minkowski weight on $\Sigma$ with $[s]=A$. Identifying Minkowski weights and cocycles on complete toric varieties, $\pi$ induces a morphism $\pi^*:M^k(\Sigma)\rightarrow M^k(\Delta)$. By applying the projection formula, one easily sees that $\pi^*$ is nothing but the refinement of Minkowski weights. Then the equation $[b]\cdot A=[c]$ translates into 
\begin{equation}
\label{tForm:before pushforward}
b\cup\pi^*(s)=\pi^*(c),
\end{equation}
where we write the intersection product as ``cup''-product to emphasize that we think of Minkowski weights as cocycles on toric varieties. Since $X_\Sigma$ is smooth and $\pi$ is proper there is the Gysin push-forward $\pi_*:M^k(\Delta)\rightarrow M^k(\Sigma)$ \cite[pp. 328-329]{F98}. Applying it to Equation \ref{tForm:before pushforward} we obtain
 \begin{equation*}
 \pi_*(b)\cup s= \pi_*(b\cup \pi^*(s))=\pi_*\pi^*(c),
 \end{equation*}
where the first equality follows from \cite[($G_4$) (i), p. 26]{FM81}. By \cite[($G_3$) (ii), p. 26]{FM81} (with $g=\id$ and $\theta=[\pi]$) we have $\pi_*\pi^*(c)=\pi_*[\pi]\cup c$. So if we show that $\pi_*[\pi]=1$ we get $\pi_*(b)\cdot A=\pi_*(b)\cup s =c$ and are done. We have
\begin{equation*}
\pi_*[\pi]\cap [X_\Sigma]=\pi_*([\pi]\cap [X_\Sigma])
\end{equation*}
by the definition of the push-forward of bivariant classes. By \cite[Ex. 17.4.3 (c)] {F98} and \cite[Cor. 8.1.3]{F98} we have $[\pi]\cap[X_\Sigma]=[X_\Delta]$. Hence 
\begin{equation*}
\pi_*[\pi]\cap [X_\Sigma]=\pi_*[X_\Delta]=[X_\Sigma]
\end{equation*}
and we are done by Poincaré duality \cite[Cor. 17.4]{F98}.
\end{proof}

Now we can easily show that the tropical intersection product on a matroid variety induces a ring structure on the group of Minkowski weights on any unimodular fan with the same support. This statement also follows from \cite[Prop. 3.13]{Sh13}, where it has been proved using the purely tropical method of tropical modifications.

\begin{cor}
\label{tCor:Ring structure on Minkowski weights}
Let $\Delta$ be a pure-dimensional unimodular fan in $\nr$ which defines a matroid variety $A$ if we assign weight $1$ to all of its maximal cones. Then the group of Minkowski weights $M^*(\Delta)$ can be given a ring structure which is compatible with the intersection product on $Z^*(A)$.
\end{cor}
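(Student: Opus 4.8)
The plan is to embed the unimodular fan $\Delta$ into a complete unimodular fan $\Sigma$ and apply Theorem~\ref{tThm:Intersection with Tropicalization is surjective} to transport the ring structure. First I would invoke toric resolution of singularities / the completion theorem for fans to find a complete unimodular fan $\Sigma$ in $\nr$ having $\Delta$ as a subfan; since $\Delta$ is already unimodular, such a $\Sigma$ exists. Now $\widetilde\Sigma:=\Delta$ is a subfan of $\Sigma$ with $|\widetilde\Sigma|=|A|$, so Theorem~\ref{tThm:Intersection with Tropicalization is surjective} gives a surjection $\rho\colon M^*(\Sigma)\to M^*(\Delta)$, $c\mapsto c\cdot A$. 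Since $\Sigma$ is complete, $M^*(\Sigma)$ already carries its Fulton--Sturmfels ring structure, which by the discussion preceding the theorem agrees with the intersection product on $Z^*(\nr)$.

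Next I would define the ring structure on $M^*(\Delta)$ as the one making $\rho$ a ring homomorphism. Concretely, for $c,d\in M^*(\Delta)$ choose preimages $\tilde c,\tilde d\in M^*(\Sigma)$ under $\rho$ and set $c\cdot d:=\rho(\tilde c\cdot\tilde d)$. Two things must be checked: (i) this is well-defined, i.e.\ independent of the chosen preimages, and (ii) the resulting product is compatible with the intersection product on $Z^*(A)$ under the inclusion $M^*(\Delta)\hookrightarrow Z^*(A)$. For (ii), note that on the level of cycles $\rho(\tilde c)=[\tilde c]\cdot A$ and, by Lemma~\ref{tLem:Intersection is Ring homomorphism} applied with $B=\nr$, the map $Z^*(\nr)\to Z^*(A)$, $C\mapsto C\cdot A$, is a ring homomorphism; hence $[\rho(\tilde c\cdot\tilde d)]=([\tilde c]\cdot[\tilde d])\cdot A=([\tilde c]\cdot A)\cdot([\tilde d]\cdot A)=[c]\cdot[d]$ in $Z^*(A)$, which is exactly the compatibility asserted. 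In particular this also settles (i): since the inclusion $M^*(\Delta)\hookrightarrow Z^*(A)$ is injective and the value $[c]\cdot[d]\in Z^*(A)$ does not depend on the choice of $\tilde c,\tilde d$, neither does $c\cdot d$. Associativity, commutativity, and the fact that the Minkowski weight with all maximal cones weighted $1$ (which maps to $A$, the unit of $Z^*(A)$) is a multiplicative unit are then inherited from $Z^*(A)$ via the injection.

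The main obstacle is really just the completion step: one needs that every unimodular fan sits inside a complete unimodular fan. This is standard (it follows from the existence of projective resolutions / Sumihiro-type completion together with toric resolution of singularities to restore unimodularity without disturbing the given subfan), so I would cite it rather than prove it. Everything after that is formal: the ring axioms are pulled back along the injection into $Z^*(A)$, and the only genuine input is the surjectivity statement of Theorem~\ref{tThm:Intersection with Tropicalization is surjective} together with the ring-homomorphism property from Lemma~\ref{tLem:Intersection is Ring homomorphism}.
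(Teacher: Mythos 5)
Your proposal is correct and follows essentially the same route as the paper: embed $\Delta$ in a complete unimodular fan $\Sigma$, use Theorem~\ref{tThm:Intersection with Tropicalization is surjective} for the surjectivity of $M^*(\Sigma)\to M^*(\Delta)$ and Lemma~\ref{tLem:Intersection is Ring homomorphism} for the ring-homomorphism property of $\_\cdot A$ on cycles, and conclude via the injection $M^*(\Delta)\hookrightarrow Z^*(A)$ that $M^*(\Delta)$ is closed under the product. The paper phrases this as a commutative square exhibiting $M^*(\Delta)$ as a subring of $Z^*(A)$, which is the same argument as your explicit preimage construction.
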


\begin{proof}
By \cite[Thm. 2.8, p. 75]{E96} and \cite[Thm. 11.1.9]{CLS} there is a complete unimodular fan $\Sigma$ with $\Delta\subseteq \Sigma$. Now consider the commutative diagram
\begin{center}
\begin{tikzpicture}[auto]
\matrix[matrix of math nodes,row sep=1.1cm]{
|(MS)|M^*(\Sigma) &[1.2cm] |(Znr)| Z^*(\nr) \\
|(MD)|M^*(\Delta) & |(ZA)| Z^*(A). \\
};
\begin{scope}[->,font=\footnotesize]
\draw (MS)--node{$\_\cdot A$} (MD);
\draw (Znr)--node{$\_\cdot A$} (ZA);
\end{scope}

\begin{scope}[right hook->]
\draw (MS) -- (Znr);
\draw (MD) -- (ZA);
\end{scope}
\end{tikzpicture}
\end{center}
All groups except $M^*(\Delta)$ occurring in this diagram have a ring structure and the morphisms at the top and on the right are ring homomorphisms. The one on the right even is surjective by Lemma \ref{tLem:Intersection is Ring homomorphism} and so is the map on the left by Theorem \ref{tThm:Intersection with Tropicalization is surjective}. Together with the fact that the horizontal maps are injective it follows that $M^*(\Delta)$ is a subring of $Z^*(A)$.
\end{proof}

\section{Linear Subvarieties of Tori}
\label{mSec}

Let us first make precise what we mean when speaking of a linear subvariety $Z$ of a torus $T$. If $T$ is equal to $\Spec K[\mathds Z^n]=\Spec K[x_1^{\pm1},\dots, x_n^{\pm1}]$ for some $n$, it would be natural to call a polynomial $f\in K[M]$ linear if it is of the form $f=a_0+\sum_i a_i x_i$ for some coefficients $a_i\in K$. As the character lattice $M$ of $T$ is always isomorphic to $\mathds Z^n$ for some $n$, the assumption $T=\Spec K[\mathds Z^n]$ is not really a restriction, and we could use this for our definition of linear by calling a subvariety $Z\subseteq T$ linear if its vanishing ideal is generated by linear polynomials. However, if we do so there will arise some difficulties: If $\Sigma$ is a fan in $\nr$, where $N$ is the lattice dual to $M$, and we take the closure $\overline Z$ of a linear subvariety $Z$ in the toric variety $X_\Sigma$, then we would like say (and in fact will prove in Proposition \ref{mProp:linear stratification}) that the subvariety $\overline Z\cap O(\sigma)$ of the torus $O(\sigma)$ is linear again. But the coordinates on $\mathds Z^n$ do not induce natural coordinates on the character lattice $M(\sigma)=M\cap \sigma^\perp$ of $O(\sigma)$ so that it is unclear what linear should mean in this context. This problem does not occur if we instead go with the following definition.
\begin{defn}
\label{mDef:Linear Ideal}
Let $M$ be a lattice and $B=\{e_1, \dots, e_n\}$ a basis for $M$. We call $f\in K[M]$ \emph{linear
with respect to $B$} if it is of the form 
\begin{align*}
	f=a_0+\sum\limits_{i=1}^n a_i\chi^{e_i}.
\end{align*}
for some $a_i\in K$, where $\chi^m$ denotes the basis vector of $K[M]$ corresponding to $m\in M$. An ideal $I\unlhd K[M]$ is said to be \emph{linear with respect to $B$} if it is generated by polynomials which are linear with respect to $B$. We will call $I$ \emph{linear} if it is linear with respect to some basis. A subvariety $Z\subseteq T$ will be called linear (w.r.t.\ some basis $B$), if its vanishing ideal in $K[M]$ is.
\end{defn}

We wish to describe the Chow cohomology $A^*(\overline Z)$ of the closure of a linear variety $Z\subset T$ in a toric variety $X=X_\Sigma$ as Minkowski weights on the fan $\Sigma$. In analogy to the result of Fulton and Sturmfels, the isomorphism between Chow cocycles and Minkowski weights should use the stratification of $X_\Sigma$ into torus orbits  and assign to a cocycle $c \in A^k(\overline Z)$ the Minkowski weight with multiplicity $\deg(c\cap [ \overline{\overline Z\cap O(\sigma)}])$ on a cone $\sigma\in\Sigma$ with $\dim(\overline Z\cap O(\sigma))=k$. But this construction cannot make sense for arbitrary $\Sigma$. The two things that can go wrong are first, that $\overline Z$ could be non-complete so that the degree in the formula above is not defined. By \cite[Prop 2.3]{Tev07}, this happens if and only if the tropicalization of $Z$ is not contained in the support of $\Sigma$. The second problem is that Minkowski weights on $\Sigma$ only assign weights to cones of a fixed dimension. So the cones $\sigma$ with $ \dim(\overline{\overline Z\cap O(\sigma)})=k$ should all have the same dimension and, furthermore, $\overline Z\cap O(\sigma)$ should be pure-dimensional. We see that our approach only makes sense if $\overline Z$ intersects all torus orbits properly. It is a direct consequence of the results proved in \cite[Chapter 14]{Gub12} that this is the case if and only if for each $\sigma\in\Sigma$ we either have $\relint(\sigma)\cap|\Trop(Z)|=\emptyset$ or $\sigma\subseteq|\Trop(Z)|$. These considerations lead to the following definition.

\begin{defn}
\label{mDef:admissible fan}
Let $Z$ be a subvariety of the torus $T$. A fan $\Sigma$ in $N_\mathds R$ is called \emph{admissible} (for $Z$) if it is unimodular, and $|\Trop(Z)|$ is a union of cones of $\Sigma$. If $\Sigma$ is admissible for $Z$, we denote by $\widetilde\Sigma=\{\sigma\in\Sigma\mid\sigma\subseteq|\Trop(Z)|\}$ the subfan of $\Sigma$ consisting of all cones contained in the tropicalization of $Z$. Note that the cones of $\widetilde\Sigma$ are exactly those cones of $\Sigma$ with $\overline Z\cap O(\sigma)\neq\emptyset$ \cite[Lemma 2.2]{Tev07}.
\end{defn}

\subsection{The stratification of $\overline Z$}

As we have just seen, admissible fans are exactly those which are unimodular and for which the closure $\overline Z$ in the corresponding toric variety is complete and intersects all torus orbits properly. The following result is an easy consequence of this.

\begin{lem}
\label{mLem:Intersection and Closure Commute}
Let $Z\subseteq T$ be a subvariety of the torus, $\Sigma$ an admissible fan for $Z$ and $\sigma\in\Sigma$ a cone in $\Sigma$. Then we have the set-theoretic equality
\begin{align*}
\overline {\overline Z\cap O(\sigma)} =\overline Z\cap V(\sigma)
\end{align*}
in the toric variety $X=X_\Sigma$, where $V(\sigma)$ denotes the closure of the torus orbit $O(\sigma)$ corresponding to $\sigma$.
\end{lem}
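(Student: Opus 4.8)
The plan is to prove the two inclusions separately, using the fact that taking closures in toric varieties is compatible with the orbit decomposition, plus the properness of the intersection $\overline Z \cap O(\sigma)$.

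First I would establish the easy inclusion $\overline{\overline Z \cap O(\sigma)} \subseteq \overline Z \cap V(\sigma)$. Since $\overline Z \cap O(\sigma) \subseteq \overline Z$ and $\overline Z$ is closed, the closure of $\overline Z \cap O(\sigma)$ is contained in $\overline Z$. Likewise $\overline Z \cap O(\sigma) \subseteq O(\sigma) \subseteq V(\sigma)$ and $V(\sigma)$ is closed, so the closure is contained in $V(\sigma)$ as well. This gives the inclusion with no hypotheses on $\Sigma$ needed.

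For the reverse inclusion $\overline Z \cap V(\sigma) \subseteq \overline{\overline Z \cap O(\sigma)}$, I would argue stratum by stratum along the orbits contained in $V(\sigma)$, i.e. the orbits $O(\tau)$ with $\tau \geq \sigma$. Fix a point $p \in \overline Z \cap O(\tau)$ for such a $\tau$; I must produce a net (or sequence, working analytically over $\mathds C$, or else a curve/valuative argument in general) of points of $\overline Z \cap O(\sigma)$ converging to $p$. The key input is the local structure of the closure $\overline Z$ near the orbit $O(\tau)$: because $\Sigma$ is admissible, $\overline Z$ meets every torus orbit properly, so $\overline Z \cap O(\tau)$ has the expected dimension, and in fact (anticipating Proposition \ref{mProp:linear stratification}, though I will avoid circularity by only using the dimension/properness statement from \cite[Chapter 14]{Gub12}) each stratum is the closure-intersection of the previous one. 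Concretely, I would use that $\overline Z$ is the closure of $Z \subseteq O(\mathbf 0) = T$, invoke \cite[Lemma 2.2]{Tev07} to know $\overline Z \cap O(\tau) \neq \emptyset$ exactly when $\tau \in \widetilde\Sigma$, and then run the standard one-parameter-subgroup degeneration: a point of $\overline Z \cap O(\tau)$ can be reached as $\lim_{t\to 0}\lambda(t)\cdot z$ for suitable $z \in Z$ and $\lambda$ in the relative interior of $\tau$; by instead taking $\lambda$ in the relative interior of $\sigma \leq \tau$ one lands in $\overline Z \cap O(\sigma)$, and letting the parameter run shows $p$ lies in the closure of $\overline Z \cap O(\sigma)$.

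I expect the main obstacle to be making the degeneration argument uniform: one needs that the point $p \in \overline Z \cap V(\sigma)$ — which a priori lives in some deeper orbit $O(\tau)$ with $\tau \gneq \sigma$ — is actually approached by points of the single stratum $\overline Z \cap O(\sigma)$ rather than merely by points of $\overline Z \cap V(\sigma)$ lying in intermediate orbits. This is where the properness of all the intersections is essential: it guarantees that the stratification $\overline Z = \bigsqcup_{\sigma} (\overline Z \cap O(\sigma))$ has the same incidence structure as the orbit stratification of $X_\Sigma$, so that $V(\sigma) = \overline{O(\sigma)}$ pulls back to $\overline{\overline Z \cap O(\sigma)} = \bigsqcup_{\tau \geq \sigma}(\overline Z \cap O(\tau))$. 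The cleanest way to package this is probably to reduce to the affine chart $U_\sigma$, where $V(\sigma)$ is cut out by the monomials $\chi^m$ with $m \in \sigma^\vee \setminus \sigma^\perp$, and to check directly that $\overline Z \cap O(\sigma)$ is dense in $\overline Z \cap V(\sigma) \cap U_\sigma$ by a dimension count using that $\overline Z \cap V(\sigma)$ is pure of the expected codimension in $\overline Z$ while $\overline Z \cap (V(\sigma)\setminus O(\sigma))$ has strictly smaller dimension.
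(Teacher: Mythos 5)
Your closing paragraph --- the reduction to the chart $U_\sigma$ and the dimension count showing every component of $\overline Z\cap V(\sigma)$ must meet $O(\sigma)$ --- is exactly the paper's proof: the paper takes an irreducible component $Y$ of $\overline Z\cap V(\sigma)$, lets $\tau\geq\sigma$ be maximal with $Y\subseteq V(\tau)$, bounds $\dim Y\leq\dim Z-\dim\tau$ using properness of the orbit intersections, bounds $\dim Y\geq\dim Z-\dim\sigma$ using smoothness of $X_\Sigma$, and concludes $\tau=\sigma$. So the route you identify as ``the cleanest way to package this'' is the right one, and it is complete once two points are made explicit. First, the lower bound: your assertion that $\overline Z\cap V(\sigma)$ is ``pure of the expected codimension'' is not an input but a consequence, and the half of it you actually need --- that every component has dimension at least $\dim Z-\dim\sigma$ --- requires the smoothness of $X_\Sigma$ (equivalently the unimodularity built into admissibility), since locally $V(\sigma)$ is cut out by $\dim\sigma$ equations in $U_\sigma\cong\mathds A^{\dim\sigma}\times T'$. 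Without this, a component could sit entirely inside a deeper stratum $\overline Z\cap O(\tau)$, $\tau\gneq\sigma$, and the properness of the orbit intersections alone would not rule that out. You never invoke smoothness, so say where the lower bound comes from.

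Second, the one-parameter-subgroup degeneration you propose as the primary route does not work as stated. It is not true in general that every point of $\overline Z\cap O(\tau)$ is of the form $\lim_{t\to 0}\lambda(t)\cdot z$ for some $z\in Z$ and some $\lambda$ with lattice point in $\relint(\tau)$: the set of such limits is only a constructible subset whose closure is $\overline Z\cap O(\tau)$, and even granting that representation for a particular $p$, the step ``take $\lambda$ in $\relint(\sigma)$ instead and let the parameter run'' does not produce a family of points of $\overline Z\cap O(\sigma)$ converging to the \emph{given} $p$ without a genuine two-step degeneration argument that you have not supplied. Fortunately none of this is needed: the dimension count already proves the reverse inclusion, because once a component $Y$ meets the open subset $O(\sigma)$ of $V(\sigma)$, the intersection $Y\cap O(\sigma)$ is dense in $Y$, whence $Y\subseteq\overline{\overline Z\cap O(\sigma)}$. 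I would drop the degeneration sketch entirely and keep only the dimension argument, with the smoothness input made explicit.
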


\begin{proof}
Let $Y$ be an irreducible component of $\overline Z\cap V(\sigma)$ and let $\tau\in\Sigma$ be a cone which is maximal with the property that $\sigma\leq\tau$ and $Y\subseteq V(\tau)$. Then $Y$ has nonempty intersection with the orbit $O(\tau)$. Because $O(\tau)$ is open in $V(\tau)$, the set $Y\cap O(\tau)$ is open in $Y$, showing that $Y$ has dimension at most $\dim(Z)-\dim(\tau)$. On the other hand, because $X$ is smooth, every component of $\overline Z\cap V(\sigma)$ has dimension at least $\dim(Z)-\dim(\sigma)$. We conclude that $\sigma=\tau$, showing that every component of $\overline Z\cap V(\sigma)$ intersects $O(\sigma)$. The equality we want to prove follows.
\end{proof}

We recall that a stratification of a variety $X$ is a finite decomposition $X= \coprod S_i$ such that each stratum $S_i$ is locally closed, and the ``boundary'' $\overline{S_i}\setminus S_i$ is a union of strata of lower dimensions. The following corollary will show that every admissible compactification of a subvariety $Z\subseteq T$ has a natural stratification. Afterwards, we will show that in case $Z$ is linear, the strata of this stratification are, in fact, linear subspaces of tori.

\begin{cor}
\label{mCor:Stratification passes to Z}
Let $\Sigma$ be an admissible fan for a subvariety $Z\subseteq T$. Then the closure of $Z$ in $X_\Sigma$ is stratified by the subsets $\{\overline Z\cap O(\sigma)\mid\sigma\in\widetilde\Sigma\}$. For $0\leq k\leq n$, the strata of dimension $k$ in this stratification are exactly those with $\sigma\in\widetilde\Sigma^{(k)}$.
\end{cor}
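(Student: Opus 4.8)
The plan is to combine three ingredients: the orbit decomposition $X_\Sigma=\coprod_{\sigma\in\Sigma}O(\sigma)$, the properness of the intersections $\overline Z\cap O(\sigma)$ built into admissibility, and Lemma \ref{mLem:Intersection and Closure Commute}, which already computes the closure of each candidate stratum. First I would intersect the orbit decomposition with $\overline Z$, obtaining $\overline Z=\coprod_{\sigma\in\Sigma}\bigl(\overline Z\cap O(\sigma)\bigr)$; by the remark at the end of Definition \ref{mDef:admissible fan} (that is, \cite[Lemma 2.2]{Tev07}) the terms with $\sigma\notin\widetilde\Sigma$ vanish and the remaining ones are nonempty, so $\overline Z=\coprod_{\sigma\in\widetilde\Sigma}\bigl(\overline Z\cap O(\sigma)\bigr)$ is a finite decomposition into nonempty pieces. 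Each piece is locally closed in $\overline Z$, because $O(\sigma)$ is open in $V(\sigma)$, which is closed in $X_\Sigma$, so $\overline Z\cap O(\sigma)$ is open in the closed subset $\overline Z\cap V(\sigma)$ of $\overline Z$.

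Next I would pin down the dimensions. Admissibility of $\Sigma$ is exactly the condition, recalled just before Definition \ref{mDef:admissible fan} via \cite[Chapter 14]{Gub12}, that $\overline Z$ meets every torus orbit properly. Since $O(\sigma)$ has codimension $\dim\sigma$ in the smooth variety $X_\Sigma$, properness forces every component of $\overline Z\cap O(\sigma)$ to have dimension $\dim Z-\dim\sigma$; in particular every stratum is pure-dimensional. As $|\widetilde\Sigma|=|\Trop(Z)|$ has dimension $\dim Z$, a cone $\sigma\in\widetilde\Sigma$ has codimension $\dim Z-\dim\sigma$ in $\widetilde\Sigma$, so $\overline Z\cap O(\sigma)$ has dimension $k$ precisely when $\sigma\in\widetilde\Sigma^{(k)}$. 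This is the dimension statement of the corollary.

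It remains to verify the boundary axiom, and here Lemma \ref{mLem:Intersection and Closure Commute} does the work: the closure of $\overline Z\cap O(\sigma)$ in $\overline Z$ equals $\overline Z\cap V(\sigma)=\coprod_{\sigma\leq\tau\in\widetilde\Sigma}\bigl(\overline Z\cap O(\tau)\bigr)$, so the boundary $\overline{\overline Z\cap O(\sigma)}\setminus\bigl(\overline Z\cap O(\sigma)\bigr)$ is the disjoint union of the strata indexed by the cones $\tau\in\widetilde\Sigma$ with $\tau\supsetneq\sigma$. Each such $\tau$ has $\dim\tau>\dim\sigma$, hence its stratum has dimension $\dim Z-\dim\tau<\dim Z-\dim\sigma=\dim\bigl(\overline Z\cap O(\sigma)\bigr)$, so the boundary is a union of strata of strictly smaller dimension; this gives all the requirements of a stratification. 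The only non-formal inputs are Lemma \ref{mLem:Intersection and Closure Commute} and the properness of the intersections (equivalently, Gubler's characterisation of admissibility together with $\dim\Trop(Z)=\dim Z$); with these at hand the argument is essentially bookkeeping, so I do not anticipate a genuine obstacle.
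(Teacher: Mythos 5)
Your proof is correct and follows essentially the same route as the paper: the dimension statement comes from properness of the orbit intersections (built into admissibility) together with $\dim(\Trop(Z))=\dim(Z)$, and the boundary axiom comes from Lemma \ref{mLem:Intersection and Closure Commute}. You simply spell out the bookkeeping (local closedness, nonemptiness via \cite[Lemma 2.2]{Tev07}, and the decomposition $V(\sigma)=\coprod_{\tau\geq\sigma}O(\tau)$) that the paper leaves implicit.
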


\begin{proof}
We  have seen that the closures of $Z$ in toric varieties corresponding to admissible fans intersect all orbits properly. Together with the fact that $\dim(\Trop(Z))=\dim(Z)$, this proves the statement about dimensions. The rest follows from Lemma \ref{mLem:Intersection and Closure Commute}.
\end{proof}

\begin{prop}
\label{mProp:linear stratification}
Let $Z\subseteq T$ be linear and $\Sigma$ an admissible fan. Then for each cone $\sigma\in\widetilde\Sigma$ the intersection $\overline Z\cap O(\sigma)$ is a  linear subvariety of the torus $O(\sigma)$.
\end{prop}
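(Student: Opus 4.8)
The plan is to work locally on the affine toric chart of $X_\Sigma$ corresponding to $\sigma$ and to identify $\overline Z\cap O(\sigma)$ explicitly by computing initial forms of the linear generators of the vanishing ideal $I$ of $Z$. Fix a basis $B=\{e_1,\dots,e_n\}$ of $M$ with respect to which $I$ is linear, and write $N$ for the dual lattice. The orbit $O(\sigma)$ is a torus with character lattice $M(\sigma)=M\cap\sigma^\perp$, and the coordinate ring of its closure inside $U_\sigma=\Spec K[\sigma^\vee\cap M]$ is $K[M(\sigma)]=K[\sigma^\vee\cap M]/(\chi^m:m\in\sigma^\vee\cap M,\ m\notin\sigma^\perp)$. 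So the vanishing ideal of $\overline Z\cap O(\sigma)$ in $K[M(\sigma)]$ is the image of the ideal of $\overline Z\cap U_\sigma$, which in turn is generated by the $\sigma$-initial forms of elements of $I$. Concretely, for $f=a_0+\sum_i a_i\chi^{e_i}\in I$, one first clears denominators so that $f\in K[\sigma^\vee\cap M]$ (multiply by a suitable $\chi^m$), and then the image of $f$ in $K[M(\sigma)]$ is obtained by killing every monomial whose exponent is not in $\sigma^\perp$.

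The key computation is then the following. Pick a lattice point $w\in\relint(\sigma)\cap N$; since $\sigma\subseteq|\Trop(Z)|$, the weight vector $w$ lies in the tropicalization, so by the characterization of tropicalization via initial degenerations the initial ideal $\ini_w(I)$ is a proper (and in fact, here, monomial-free-enough) ideal. First I would argue that $\ini_w(I)$ is again generated by elements that are "linear'' in the relevant sense: for a linear $f=a_0+\sum a_i\chi^{e_i}$, the initial form $\ini_w(f)$ is the sum of those terms among $a_0\chi^0,a_1\chi^{e_1},\dots,a_n\chi^{e_n}$ on which the linear functional $\langle w,-\rangle$ (applied to the exponent) attains its minimum — so $\ini_w(f)$ is again a $K$-linear combination of a subset of $\{\chi^0,\chi^{e_1},\dots,\chi^{e_n}\}$, after multiplying by $\chi^{-(\text{common exponent})}$ it becomes genuinely linear with respect to $B$ (or with respect to a translate of $B$). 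Passing from $\ini_w(I)$ to the ideal of $\overline Z\cap O(\sigma)$ amounts to further setting $\chi^{e_i}=0$ for those $i$ with $e_i\notin\sigma^\perp$ and keeping the monomials $\chi^{e_i}$ with $e_i\in\sigma^\perp$ as honest coordinates on $O(\sigma)$ (together with the constant). This produces generators that are linear combinations of $1$ and a subset of a basis of $M(\sigma)$ — but only if the surviving $e_i$ extend to, or form part of, a basis of $M(\sigma)$, which is where unimodularity of $\sigma$ enters: $\sigma^\perp\cap M$ has a basis that can be chosen compatibly, and since $Z$ is linear we may even arrange $B$ so that it restricts to such a basis on each face.

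The main obstacle I anticipate is precisely the bookkeeping of bases: the subset $\{e_i : e_i\in\sigma^\perp\}$ need not span $M(\sigma)$, so one cannot naively read off linearity with respect to a basis of $M(\sigma)$ from the fixed basis $B$ of $M$. The resolution should use that $\sigma$ is a unimodular cone, so $M(\sigma)=M/(\sigma^\perp)^{\!*}$-type splittings are available and one can complete the surviving characters to a basis; and crucially one should invoke the fact (implicit in the linear-algebra behind matroid theory) that a linear ideal stays linear under the operations "set some coordinates to zero'' and "take initial forms with respect to a cost vector'', possibly after a change of basis, because these are exactly the deletion/contraction-type operations on the associated matroid. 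I would isolate this as a purely algebraic lemma: if $I\unlhd K[M]$ is linear with respect to $B$ and $w\in N$, then $\ini_w(I)$ is linear; and if $M'\subseteq M$ is a saturated sublattice spanned by a subset of $B$ together with possibly one more vector (coming from $w$), then the image of $I$ in $K[M/M'']$ under the relevant quotient is linear with respect to the induced basis. Granting that lemma, the proposition follows by combining it with the explicit description of the coordinate ring of $\overline Z\cap O(\sigma)$ inside $U_\sigma$ and with Lemma~\ref{mLem:Intersection and Closure Commute} to know that this intersection is what we expect.
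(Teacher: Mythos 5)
Your strategy coincides with the paper's: pass to the chart $U_\sigma$, identify the ideal of $\overline Z\cap O(\sigma)$ with (the image in $K[M(\sigma)]$ of) an initial ideal $\ini_w(I)$ for $w$ in $\sigma$, use that initial ideals of linear ideals are linear, and then complete the surviving exponents to a basis of $M(\sigma)$. The one structural difference is that the paper first reduces to the case $\dim\sigma=1$ by induction (via Lemma \ref{mLem:Intersection and Closure Commute} and the admissibility of $\Star_\Sigma(\tau)$ for $\overline Z\cap O(\tau)$), where the identification $I_\sigma=\ini_u(I)\cap K[M(\sigma)]$ can be checked by hand; your one-shot treatment of a general $\sigma$ via $w\in\relint(\sigma)$ is workable but makes that identification the hardest unproved step of your sketch.

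Two steps need genuine repair. First, your argument that $\ini_w(I)$ is linear only computes the initial forms of a \emph{fixed} linear generating set, but an initial ideal is in general not generated by the initial forms of an arbitrary generating set. Concretely, for $I=\langle 1+x_1+x_2+x_3,\,1+x_1+x_2+x_4\rangle$ and $w=(0,0,1,1)$ one has $\ini_w(f_1)=\ini_w(f_2)=1+x_1+x_2$, yet $x_3-x_4=\ini_w(f_1-f_2)\in\ini_w(I)$ does not lie in $\langle 1+x_1+x_2\rangle$, even though $w\in|\Trop(Z)|$. What you actually need is that the initial forms of \emph{all} linear elements of $I$ are linear and generate $\ini_w(I)$; this is precisely the standard-basis fact the paper imports from \cite[Prop.\ 1.4.4]{CA07}, and it must be invoked or proved, not inferred from the generators. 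Second, your resolution of the basis problem is not quite right: after normalizing a $w$-homogeneous linear initial form $f_i$ by $\chi^{-r(i)}$ so that it lands in $K[M(\sigma)]$, its monomials have exponents of the form $e_j-r(i)$, which are generally \emph{not} among the original $e_j\in\sigma^\perp$, so one is not "keeping the $\chi^{e_i}$ with $e_i\in\sigma^\perp$ as coordinates"; nor can one choose $B$ once and for all to be compatible with every face. The paper instead constructs, for each $\sigma$ separately, the set $B'$ of these exponent differences, verifies that it generates a saturated sublattice, and completes it to a new basis $\widetilde B$ of $M(\sigma)$ with respect to which the generators are linear. With these two repairs your outline becomes the paper's proof.
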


\begin{proof}
We prove the statement by induction on the dimension of $\sigma$. If $\dim(\sigma)=0$, then $\overline Z\cap O(\sigma)=Z$ is linear by assumption. Hence we can suppose $\dim(\sigma)>0$. In this case we find a face $\tau\in\widetilde\Sigma$ of $\sigma$ of dimension $\dim(\tau)=\dim(\sigma)-1$. By induction we know that $\overline Z\cap O(\tau)$ is linear, and by Lemma \ref{mLem:Intersection and Closure Commute} we have $\overline Z\cap O(\sigma)=\overline{\overline Z\cap O(\tau)}\cap O(\sigma)$. By \cite[Prop. 14.3]{Gub12}, the support of the tropicalization of $\overline Z\cap O(\tau)$ is equal to that of $\Star_{\widetilde\Sigma}(\tau)$, showing that $\Star_{\Sigma}(\tau)$, the fan corresponding to the toric variety $V(\tau)$, is admissible for $\overline Z\cap O(\tau)$. We see that by considering the toric variety $V(\tau)$ we can assume that $\dim(\sigma)=1$.

The torus orbit $O(\sigma)$ is contained in the affine toric variety $U_\sigma=\Spec K[\sigma^\vee\cap M]$. If $Z$ is given by the ideal $I\unlhd K[M]$, the intersection $I\cap K[\sigma^\vee\cap M]$ is the vanishing ideal of the closure of $Z$ in $U_\sigma$. The orbit $O(\sigma)$ is embedded into $U_\sigma$ via the morphism
\begin{align*}
\phi\colon K[\sigma^\vee\cap M]\rightarrow K[M(\sigma)],\;\; \chi^u\mapsto\begin{cases}
\chi^u  &\text{, if $u\in\sigma^\bot$}\\
0       &\text{, else}.
\end{cases}
\end{align*}
It follows that the ideal of $\overline Z\cap O(\sigma)$ in the coordinate ring of $O(\sigma)$ is given by the radical of $I_\sigma\coloneqq\phi(I\cap K[\sigma^\vee\cap M])$. We claim that $I_\sigma$ is equal to $\ini_u(I)\cap K[M(\sigma)]$. Whenever the image $\phi(f)$ of an $f\in K[\sigma^\vee\cap M]$ is nonzero, the $u$-weight of $f$ must be zero. Hence $\ini_u(f)$ is equal to the sum of terms of $f$ with $u$-weight zero, which are exactly those which do not go to zero when applying $\phi$. It follows that $\phi(f)=\ini_u(f)\in\ini_u(I)\cap K[M(\sigma)]$. Conversely, if $0\neq g\in\ini_u(I)\cap K[M(\sigma)]$, then $g$ is $u$-homogeneous and therefore there exists some $f\in I$ with $g=\ini_u(f)$. With the same argument as before we get $f\in K[\sigma^\vee\cap M]$ and $g=\ini_u(f)=\phi(f)\in I_\sigma$.

Let $B$ be a basis of $M$ with respect to which $I$ is linear. It is a consequence of \cite[Prop. 1.4.4]{CA07} that the initial ideal $\ini_u(I)$ is again linear with respect to $B$, say generated by the linear polynomials $f_1,\dots, f_k$. Because $\ini_u(I)$ is $u$-homogeneous, we can assume all of the $f_i$ to be homogeneous with respect to $u$. Let $\mathcal M$ be the set of all $m\in M$ such that the monomial $\chi^m$ occurs in one of the $f_i$. We define an equivalence relation on $\mathcal M$ by letting $m\sim m'$ if and only if $\langle m, u\rangle=\langle m',u\rangle$, and for each equivalence class $x\in \mathcal M/\sim$ we choose a representative $r(x)\in x$. As each $f_i$ is $u$-homogeneous, all of the exponents occurring in $f_i$ are equivalent, hence there is a well defined equivalence class $[f_i]\in \mathcal M/\sim$ with generator $r(i)=r([f_i])$. This choice of $r(i)$ ensures that $\chi^{-r(i)}f_i\in K[M(\sigma)]$. Hence, $\ini_u(I)$ is generated in $K[M(\sigma)]$, namely by the polynomials $\chi^{-r(1)}f_1,\dots, \chi^{-r(k)}f_k$, and since $K[M]$ is free as $K[M(\sigma)]$-module, the ideal $\ini_u(I)\cap K[M(\sigma)]$ is generated by these polynomials as well. Let $B'=\{a-r(x)\mid x\in\mathcal M/\sim,~ a\in x\setminus\{r(x)\}\}$. This is certainly a subset of $M(\sigma)$, and if we add the elements $\{r(x)\mid x\in\mathcal M/\sim,~ 0\notin x\}$ we obtain a basis of the sublattice of $M$ generated by the elements in $\mathcal M$. Since this sublattice is saturated, we can also complete $B'$ to a basis $\widetilde B$ of $M(\sigma)$. By construction, the generators for $\ini_u(I)\cap K[M(\sigma)]$ from above are all linear with respect to this basis, showing that $\overline Z\cap O(\sigma)$ is linear.
\end{proof}

Before we start to consider the Chow groups and the Chow cohomology of linear subvarieties of tori, let us state the following immediate consequence of the preceding proposition, which will be of great importance in the proof of Theorem \ref{mThm:Pullback is Tropicalization}.

\begin{cor}
\label{mCor:Tropiclization and is local}
Let $Z$ be a linear subvariety of the torus $T$, and $\mathcal A=(\Sigma,\omega)$ a unimodular tropical fan representing the cycle $\Trop(Z)$. Furthermore, let $\sigma\in\Sigma$. Then the tropicalization of the subvariety $\overline Z\cap O(\sigma)$ of the torus $O(\sigma)$ is represented by $\Star_{\mathcal A}(\sigma)$.
\end{cor}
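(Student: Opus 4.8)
The plan is to compute $\Trop(\overline Z\cap O(\sigma))$ directly from its description via Chow cocycles in Proposition and Definition~\ref{tPropdef:Tropicalization} and then to identify the resulting Minkowski weight, cone by cone, with the one on $\Star_{\mathcal A}(\sigma)$ coming from $\omega$; the key point is that passing from $T$ to the orbit $O(\sigma)$ corresponds, on the level of toric varieties, to passing to the orbit closure $V(\sigma)$.

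First I would fix a complete unimodular fan $\Sigma'$ containing $\Sigma$ as a subfan, which exists by \cite[Thm.~2.8]{E96} and \cite[Thm.~11.1.9]{CLS} (cf.\ the proof of Corollary~\ref{tCor:Ring structure on Minkowski weights}). Since $|\Sigma|=|\mathcal A|=|\Trop(Z)|$ is a union of cones of $\Sigma'$, this fan is admissible for $Z$; moreover a cone of $\Sigma'$ is contained in $|\Sigma|$ if and only if it already belongs to $\Sigma$, so $\widetilde{\Sigma'}=\Sigma$. Then $V(\sigma)$ is the complete unimodular toric variety with fan $\Star_{\Sigma'}(\sigma)$ and torus $O(\sigma)$, the fan $\Star_\Sigma(\sigma)$ underlying $\Star_{\mathcal A}(\sigma)$ is a subfan of $\Star_{\Sigma'}(\sigma)$, and by Lemma~\ref{mLem:Intersection and Closure Commute} the closure of $\overline Z\cap O(\sigma)$ in $V(\sigma)$ is $\overline Z\cap V(\sigma)$.

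By Proposition~\ref{mProp:linear stratification} the variety $\overline Z\cap O(\sigma)\subseteq O(\sigma)$ is linear, and by \cite[Prop.~14.3]{Gub12} (applied as in the proof of that proposition) the support of $\Trop(\overline Z\cap O(\sigma))$ equals $|\Star_{\widetilde{\Sigma'}}(\sigma)|=|\Star_\Sigma(\sigma)|$. In particular $\Star_{\Sigma'}(\sigma)$ is admissible for $\overline Z\cap O(\sigma)$, so Proposition and Definition~\ref{tPropdef:Tropicalization} identifies $\Trop(\overline Z\cap O(\sigma))$ with the tropical cycle $[c]$, where $c$ is the Minkowski weight on $\Star_{\Sigma'}(\sigma)$ corresponding to $[\overline Z\cap V(\sigma)]\in A^*(V(\sigma))$; as its support is $|\Star_\Sigma(\sigma)|$, the weight $c$ vanishes off $\Star_\Sigma(\sigma)$, and it remains to show $c(\tau/N_\sigma)=\omega(\tau)$ for every maximal cone $\tau\in\Sigma$ with $\sigma\leq\tau$. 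The orbit closure of $O(\tau)$ in $V(\sigma)$ is the orbit closure $V(\tau)$ of $X_{\Sigma'}$, so $c(\tau/N_\sigma)=\deg\bigl([\overline Z\cap V(\sigma)]\cdot[V(\tau)]\bigr)$; since $V(\tau)\subseteq V(\sigma)$ this $0$-cycle is supported on $\overline Z\cap V(\tau)$ and has the same degree as $\deg\bigl([\overline Z]\cdot[V(\tau)]\bigr)$. The latter is the value at $\tau$ of the Minkowski weight on $\Sigma'$ attached to $[\overline Z]$, which by Proposition and Definition~\ref{tPropdef:Tropicalization} represents $\Trop(Z)=[\mathcal A]$ and hence equals $\omega(\tau)$. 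Therefore $\Trop(\overline Z\cap O(\sigma))=[c]=[\Star_{\mathcal A}(\sigma)]$.

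I expect the main obstacle to be the equality $\deg\bigl([\overline Z\cap V(\sigma)]\cdot_{V(\sigma)}[V(\tau)]\bigr)=\deg\bigl([\overline Z]\cdot_{X_{\Sigma'}}[V(\tau)]\bigr)$: it amounts to the compatibility of the Gysin pull-back along the regular embedding $V(\sigma)\hookrightarrow X_{\Sigma'}$ (a transverse intersection of toric divisors, $X_{\Sigma'}$ being smooth) with the proper intersections in question --- equivalently, to the statement that the cocycle Poincar\'e dual to $[\overline Z\cap V(\sigma)]$ is the restriction to $V(\sigma)$ of the cocycle Poincar\'e dual to $[\overline Z]$. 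The underlying fact that all these toric intersections are multiplicity-free is part of \cite[Chapter~14]{Gub12} and is anyway subsumed by the convention of working only with reduced subvarieties; the other point requiring care, the admissibility of the fan on which the tropicalization is computed, is exactly what \cite[Prop.~14.3]{Gub12} supplies.
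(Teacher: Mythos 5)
Your scaffolding is sound --- choosing a complete unimodular $\Sigma'$ containing $\Sigma$, identifying $\Trop(\overline Z\cap O(\sigma))$ with the Minkowski weight of $[\overline Z\cap V(\sigma)]$ on $\Star_{\Sigma'}(\sigma)$, and invoking \cite[Prop.~14.3]{Gub12} for the supports --- but the step you yourself flag as the main obstacle is a genuine gap, and neither justification you offer for it works. The equality $\deg\bigl([\overline Z\cap V(\sigma)]\cdot_{V(\sigma)}[V(\tau)]\bigr)=\deg\bigl([\overline Z]\cdot_{X_{\Sigma'}}[V(\tau)]\bigr)$ is, as you correctly say, equivalent to the Gysin pull-back of $[\overline Z]$ along $V(\sigma)\hookrightarrow X_{\Sigma'}$ being the \emph{reduced} cycle $[\overline Z\cap V(\sigma)]$, i.e.\ to all intersection multiplicities of $\overline Z$ with $V(\sigma)$ being $1$. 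The convention of working with reduced subvarieties only fixes what the symbol $\overline Z\cap V(\sigma)$ denotes; it cannot make an intersection product multiplicity-free. Nor does \cite[Prop.~14.3]{Gub12} supply this: the paper stresses right after Lemma~\ref{mLem:Intersection and Closure Commute} that these statements are purely set-theoretic and neglect potentially arising intersection multiplicities. For a general subvariety of a torus these multiplicities can indeed exceed $1$, so linearity must enter at exactly this point, and you never actually bring it to bear there.

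The gap can be closed, but the cheapest repair collapses your argument to the paper's two-line proof: since $Z$ is linear, $\Trop(Z)$ is a matroid variety, so $\omega\equiv 1$ and $\Star_{\mathcal A}(\sigma)$ carries only weights $1$; since $\overline Z\cap O(\sigma)$ is linear by Proposition~\ref{mProp:linear stratification}, its tropicalization also carries only weights $1$; together with the equality of supports from \cite[Prop.~14.3]{Gub12} this finishes the proof with no Gysin computation at all. If you want to keep the intersection-theoretic route (which would also pin down the weights without quoting that matroid varieties have unit weights), you must actually prove the multiplicity-one statement, e.g.\ by induction on $\dim\sigma$: for a ray $\rho$ the computation in (the proof of) Lemma~\ref{mLem:ChowGroupRelations} shows that $\mathcal O_{\overline Z\cap O(\rho),\overline Z}$ is a DVR whose uniformizer is a local equation of $V(\rho)$, giving multiplicity $1$ there, and Proposition~\ref{mProp:linear stratification} propagates linearity to the strata so the induction can continue.
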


\begin{proof}
By \cite[Prop. 14.3]{Gub12}, the underlying sets of $\Trop(\overline Z\cap O(\sigma))$ and $\Star_\mathcal A(\sigma)$ are equal, so we just need to show that the weights coincide. But $Z$ is linear by assumption and $\overline Z\cap O(\sigma)$ by Proposition \ref{mProp:linear stratification}, hence all weights are $1$ in both cases.
\end{proof}

\begin{rem}
The preceding lemma also follows from known results about extended tropicalization and tropical toric varieties: Following Kajiwara \cite{Kaj08} and Payne \cite{P09b} one can construct
a tropical toric variety $\trop(X_\Sigma)$ associated to $\Sigma$ and a natural tropicalization map
\begin{equation*}
\trop\colon X \to \trop(X).
\end{equation*}
The stratification of $X$ into orbits induces a stratification $\trop(X) = \coprod \trop(O(\sigma))$ and each stratum $\trop(O(\sigma))$ is canonically isomorphic to $N(\sigma)_{\mathds R}$. Furthermore, the restriction of $\trop$ to the torus $O(\sigma)$ is equal to the ordinary tropicalization map. This works for any valued field, but to give the desired result we should work over a field extension of $K$ with surjective valuation. Then the image $\trop(\overline Z)$ is the closure of $\trop(Z)$ in $\trop(X)$ by \cite[Lemma 3.1.1]{OP13}. Hence
\begin{equation*}
\trop(\overline Z \cap O(\sigma)) = \overline{\trop(Z)} \cap \trop(O(\sigma)),
\end{equation*}
which is equal to $\Star_{\mathcal A}(\sigma)$ by \cite[Lemma 3.1.2]{OP13}.
\end{rem}

\subsection{Intersection theory on $\overline Z$}

Our first result on the intersection theory of linear subvarieties of toric varieties will give us generators for the Chow groups.

\begin{cor}
\label{mCor:Chow group generators}
Let $Z\subseteq T$ be linear, $\Sigma$ an admissible fan for $Z$, and $0\leq k\leq \dim(Z)$. Then the $k$-th Chow group of the closure of $Z$ in $X_\Sigma$ is generated by the cycles $[\overline Z\cap V(\sigma)]$ for $\sigma\in\widetilde\Sigma^{(k)}$.
\end{cor}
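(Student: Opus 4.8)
The plan is to reduce the statement to the analogous description of the Chow groups of toric varieties via the stratification of $\overline Z$ established in Corollary \ref{mCor:Stratification passes to Z}. Recall that for a variety $X$ stratified by locally closed subsets $S_i$, the Chow group $A_k(X)$ is generated by the classes $[\overline{S_i}]$ of the closures of the strata of dimension $\geq k$, together with contributions from the boundary; more precisely, the classes $[\overline{S_i}]$ with $\dim \overline{S_i} = k$ together with the images of $A_k(\overline{S_j})$ for strata $S_j$ of dimension $>k$ suffice, and one proceeds by descending induction on the (finitely many) dimensions. This is exactly the setup for which $\overline Z$ is suited: by Corollary \ref{mCor:Stratification passes to Z} the strata are the $\overline Z\cap O(\sigma)$ for $\sigma\in\widetilde\Sigma$, a stratum attached to $\sigma\in\widetilde\Sigma^{(j)}$ has dimension $j$, and its closure is $\overline Z\cap V(\sigma)$ by Lemma \ref{mLem:Intersection and Closure Commute}.

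First I would record the elementary fact that each open stratum $\overline Z\cap O(\sigma)$ is a linear subvariety of a torus (Proposition \ref{mProp:linear stratification}), hence in particular each such stratum is a rational variety whose Chow groups are generated by a single class in each degree --- or, more to the point, I only need that $A_k$ of a subvariety of a torus of dimension $>k$ contributes generators that, when pushed forward, are again of the form $[\overline Z\cap V(\tau)]$ for suitable $\tau$. Concretely: run the descending induction on dimension. In the top degree $k=\dim(Z)$ the only stratum of dimension $k$ is $\overline Z = \overline Z \cap V(0)$, and $A_{\dim Z}(\overline Z) = \mathbb Z\cdot[\overline Z]$ since $\overline Z$ is irreducible (being the closure of the irreducible $Z$). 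For the inductive step, by the excision/stratification sequence $A_k(\overline Z\setminus \bigcup_{\sigma\neq 0}\overline Z\cap V(\sigma)) = A_k(Z)$ receives a surjection from $A_k(\overline Z)$ with kernel generated by classes supported on the boundary $\bigcup_{\sigma\in\widetilde\Sigma,\,\dim\sigma\geq 1}\overline Z\cap V(\sigma)$; and $A_k(Z)=0$ for $k<\dim Z$ since $Z$ is a (rational, affine) linear subvariety of a torus. Hence $A_k(\overline Z)$ is generated by classes pushed forward from the boundary divisors, i.e. from the $A_k(\overline Z\cap V(\sigma))$ with $\sigma\in\widetilde\Sigma^{(1)}$. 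Iterating this over the strata --- each $\overline Z\cap V(\sigma)$ is itself the closure of a linear subvariety in the toric variety $V(\sigma)=X_{\Star_\Sigma(\sigma)}$, with $\Star_\Sigma(\sigma)$ admissible for $\overline Z\cap O(\sigma)$ (as observed in the proof of Proposition \ref{mProp:linear stratification}) --- produces exactly the generators $[\overline Z\cap V(\tau)]$ for $\tau\in\widetilde\Sigma^{(k)}$.

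Alternatively, and perhaps more cleanly, I would invoke directly the fact (used later in the paper) that $\overline Z$ is a \emph{linear variety} in the sense of Totaro \cite{Tot95}: such a statement would already be needed for the injectivity of $\mathcal I_Z$, and for linear varieties built up from affine spaces along a stratification the Chow groups are spanned by the closures of the strata. Under this approach the corollary is just the observation that the strata of $\overline Z$ are the $\overline Z\cap O(\sigma)$, each a linear subvariety of a torus and hence each contributing a single generator (its own fundamental class) whose closure in $\overline Z$ is $\overline Z\cap V(\sigma)$, so that the $[\overline Z\cap V(\sigma)]$ with $\sigma\in\widetilde\Sigma^{(k)}$ generate $A_k(\overline Z)$.

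The main obstacle is making sure the bookkeeping of the stratification-induction is correct: specifically, that the closure of the stratum $\overline Z\cap O(\sigma)$ inside $\overline Z$ really is $\overline Z\cap V(\sigma)$ (this is Lemma \ref{mLem:Intersection and Closure Commute}, so it is already available), and that at each stage the "interior" piece $\overline Z\cap O(\sigma)$ has vanishing $A_k$ for $k$ strictly below its dimension --- which is where linearity of the stratum enters, via Proposition \ref{mProp:linear stratification}, since a linear subvariety of a torus is an open subset of an affine space (after a monomial change of coordinates it is cut out by linear equations, hence is itself a torus times an affine-linear space, or rather an open subvariety thereof) and so has no cycles in intermediate degrees. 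Once these two points are in hand, the descending induction on $\dim\sigma$ goes through formally and yields the claimed generating set.
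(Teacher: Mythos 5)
Your argument is correct and essentially the paper's own: the paper likewise combines the stratification of Corollary \ref{mCor:Stratification passes to Z} with the linearity of the strata from Proposition \ref{mProp:linear stratification}, notes that linear subvarieties of tori have nonzero Chow groups only in top dimension, and then simply cites the standard result on Chow groups of stratified varieties (\cite[Prop. 1.19]{EH13}) in place of your explicit excision induction. One small notational slip: the boundary divisors of $\overline Z$ correspond to the \emph{rays} of $\widetilde\Sigma$, which in the paper's codimension indexing lie in $\widetilde\Sigma^{(\dim Z-1)}$, not in $\widetilde\Sigma^{(1)}$.
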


\begin{proof}
By Proposition \ref{mProp:linear stratification} we know that all strata of the canonical stratification of $\overline Z$ introduced in Corollary \ref{mCor:Stratification passes to Z} are linear subvarieties of some torus. In particular, they are all quasi-affine,
that is isomorphic to an open subset of some affine space. The statement now follows from
the well-known result that the Chow groups of varieties with a quasi-affine stratification are
generated by the closures of the strata	\cite[Prop. 1.17]{EH16}.
\end{proof}

Now we know generators of the Chow groups of $\overline Z$, but we do not know any relations between them. The next lemma will be the essential ingredient to change this.

\begin{lem}
\label{mLem:ChowGroupRelations}
Let $I\unlhd K[M]$ be a linear ideal and $Z$ the corresponding subvariety of the torus. Furthermore, let $\Sigma$ be an admissible fan for $Z$, let $\sigma\in\widetilde\Sigma$ be a ray in $\Sigma$ with primitive generator $u_\sigma$, and let $m\in M$. Then $\chi^m\in K[M]$ defines a rational function on the closure of $Z$ in $X_\Sigma$ and
\begin{align*}
\ord_{\overline{Z}\cap V(\sigma)}(\chi^m)=\langle m, u_\sigma\rangle.
\end{align*}
In particular, we have $\divv(\chi^m)=\sum_\sigma\langle m,u_\sigma\rangle [\overline Z\cap V(\sigma)]$, where the sum is taken over all rays of $\widetilde\Sigma$.

\end{lem}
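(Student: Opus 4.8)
The plan is to reduce the computation of the order of vanishing $\ord_{\overline Z\cap V(\sigma)}(\chi^m)$ to a local calculation in the affine chart $U_\sigma$, and then to identify it with the $u_\sigma$-weight $\langle m,u_\sigma\rangle$ using the description of $\overline Z\cap O(\sigma)$ via initial ideals obtained in the proof of Proposition \ref{mProp:linear stratification}. First I would observe that $\chi^m$ is a unit on the big torus $T\subseteq X_\Sigma$, hence a nonzero rational function on $\overline Z$, so the order of vanishing along the prime divisor $\overline Z\cap V(\sigma)$ is well-defined (here one uses that $\overline Z\cap V(\sigma)$ is a prime divisor of $\overline Z$: it is irreducible because $\overline Z\cap O(\sigma)$ is a linear subvariety of a torus by Proposition \ref{mProp:linear stratification}, hence irreducible, and it has codimension one in $\overline Z$ because $\Sigma$ is admissible so $\overline Z$ meets all orbits properly and $\dim(\overline Z\cap O(\sigma)) = \dim Z - 1$ for $\sigma$ a ray of $\widetilde\Sigma$). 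Since the statement only depends on a neighbourhood of the generic point of $\overline Z\cap V(\sigma)$, I may replace $X_\Sigma$ by the affine chart $U_\sigma=\Spec K[\sigma^\vee\cap M]$.

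Next I would make the order of vanishing concrete. Pick $m_0\in\sigma^\vee\cap M$ with $\langle m_0,u_\sigma\rangle > 0$; then $\chi^{m_0}$ restricted to the closure of $Z$ in $U_\sigma$ is a regular function cutting out (set-theoretically, and with the right multiplicity on each component) the boundary divisor, and $\chi^{m+km_0}$ is regular for $k\gg 0$. By additivity of $\ord$ it suffices to compute $\ord_{\overline Z\cap V(\sigma)}(\chi^{m_0})$ for $m_0\in\sigma^\vee\cap M$, and more precisely to show that for such $m_0$ the order equals $\langle m_0,u_\sigma\rangle$; the general case follows by writing $m = (m+km_0)-km_0$. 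Concretely, localize the coordinate ring $R = K[\sigma^\vee\cap M]/(I\cap K[\sigma^\vee\cap M])$ at the prime ideal $P$ corresponding to the generic point of $\overline Z\cap O(\sigma)$; the local ring $R_P$ is a DVR (as $\overline Z\cap V(\sigma)$ is a prime divisor on the variety $\overline Z$, which is a variety, so normal along that divisor after reducing — in fact one only needs that $R_P$ is a one-dimensional Noetherian local domain so that $\ord$ is defined as the length of $R_P/(\chi^{m_0})$ divided appropriately, using Fulton's definition of $\ord$), and I must compute the valuation of the image of $\chi^{m_0}$.

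The heart of the argument is then the following: the maximal ideal of $R_P/(\text{its nilradical})$ is generated by the images of $\chi^{m_0}$ together with elements coming from $M(\sigma)$-monomials, and by the initial-ideal analysis in the proof of Proposition \ref{mProp:linear stratification} the quotient $R/P$ is exactly $K[M(\sigma)]/(\text{radical of }I_\sigma)$ with $I_\sigma = \phi(I\cap K[\sigma^\vee\cap M]) = \ini_{u_\sigma}(I)\cap K[M(\sigma)]$. Choosing $m_0$ with $\langle m_0,u_\sigma\rangle = 1$ (possible since $u_\sigma$ is primitive, so $u_\sigma$ is part of a basis of $N$ and the dual basis supplies such an $m_0$; for general $m_0\in\sigma^\vee\cap M$ one writes $m_0$ as a nonnegative combination and uses additivity), one checks that $\chi^{m_0}$ is a uniformizer: the associated graded of $R$ with respect to the $u_\sigma$-filtration is $\ini_{u_\sigma}(K[M])/\ini_{u_\sigma}(I)$ by \cite[Prop. 1.4.4]{CA07}-type arguments, and passing to the localization at $P$ identifies the $u_\sigma$-weight of a monomial with its valuation. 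Hence $\ord_{\overline Z\cap V(\sigma)}(\chi^{m_0}) = \langle m_0,u_\sigma\rangle$, and additivity over $\sigma^\vee\cap M$ followed by the telescoping $m = (m+km_0)-km_0$ gives the formula for arbitrary $m\in M$. The final divisor formula $\divv(\chi^m) = \sum_\sigma\langle m,u_\sigma\rangle[\overline Z\cap V(\sigma)]$ then follows because $\chi^m$ is a unit on $T$, so its divisor is supported on $\overline Z\setminus T = \bigcup_{\sigma\in\widetilde\Sigma^{(1)}}\overline Z\cap V(\sigma)$, and these are exactly the prime divisors by Corollary \ref{mCor:Chow group generators}.

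The main obstacle I anticipate is making the "associated graded / initial ideal" step rigorous at the level of the local ring $R_P$ rather than just the global ring $R$: one needs to know that localizing at $P$ does not destroy the $u_\sigma$-grading information, i.e. that the $u_\sigma$-valuation on $K[M]$ descends to the divisorial valuation on $R_P$. This is essentially the content of the compatibility of initial degenerations with orbit closures (Gubler's \cite{Gub12}, already invoked), so I would phrase this step as an appeal to \cite[Prop. 14.3]{Gub12} together with the explicit identification $I_\sigma = \ini_{u_\sigma}(I)\cap K[M(\sigma)]$ proved in Proposition \ref{mProp:linear stratification}, rather than redoing it by hand. A secondary technical point is checking irreducibility and codimension-one-ness of $\overline Z\cap V(\sigma)$ so that $\ord$ is defined in the first place, but this is immediate from admissibility of $\Sigma$ and Proposition \ref{mProp:linear stratification}.
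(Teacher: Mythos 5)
Your overall strategy---restrict to the affine chart $U_\sigma$, localize $R=K[\sigma^\vee\cap M]/(I\cap K[\sigma^\vee\cap M])$ at the generic point $P$ of $\overline Z\cap O(\sigma)$, and feed in the initial-ideal analysis from the proof of Proposition \ref{mProp:linear stratification}---is the right one and is essentially the paper's. But the decisive step is left as an assertion: you say that ``passing to the localization at $P$ identifies the $u_\sigma$-weight of a monomial with its valuation,'' which is precisely the statement to be proved, and neither the associated-graded heuristic nor the proposed appeal to \cite[Prop.~14.3]{Gub12} (which concerns only supports of tropicalizations, not orders of vanishing or lengths) supplies the missing argument. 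Likewise, the claim that $\overline Z$ is normal along $\overline Z\cap V(\sigma)$ is unjustified---an integral variety need not be regular in codimension one---so you cannot obtain the DVR property that way; and once you retreat to Fulton's length definition of $\ord$, you still have to compute a length, which your sketch does not do.

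The fact you are circling around but never land on is this: since $Z$ is linear, the proof of Proposition \ref{mProp:linear stratification} shows that $I_\sigma=\phi(I\cap K[\sigma^\vee\cap M])$ is linear with respect to some basis and hence \emph{prime}, so $\phi^{-1}(I_\sigma)=\ker\phi+I\cap K[\sigma^\vee\cap M]$ is already the vanishing ideal of $\overline Z\cap O(\sigma)$ in $U_\sigma$ (no radical is needed). Moreover, each generator $\phi(f)$ of $I_\sigma$ differs from $f\in I\cap K[\sigma^\vee\cap M]$ by an element of $\ker\phi$, so in $R$ the prime $P$ is simply the image of $\ker\phi$. Choosing a basis $e_1^*,\dots,e_n^*$ of $N$ with $e_1^*=u_\sigma$ and dual basis $e_1,\dots,e_n$ of $M$, one has $\ker\phi=\langle\chi^{e_1}\rangle$, hence $P=(\chi^{e_1})R$ is principal. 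This single observation makes $R_P$ a DVR with uniformizer $\chi^{e_1}$ (no normality hypothesis, no associated graded, no separate treatment of $m_0\in\sigma^\vee\cap M$), and the formula follows by writing $m=\langle m,u_\sigma\rangle e_1+\sum_{i\geq2}\langle m,e_i^*\rangle e_i$ and noting that $\chi^{e_i}$ is a unit in $R_P$ for $i\geq2$. Your derivation of the ``in particular'' statement from the support of $\divv(\chi^m)$ being contained in $\overline Z\setminus Z$ is fine.
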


\begin{proof}
The character $\chi^m$ clearly defines a rational function on $\overline Z$. To prove the formula we can assume that $X_\Sigma=U_\sigma$ in which case we have $\overline Z\cap V(\sigma)=\overline Z\cap O(\sigma)$. Let $\phi:K[\sigma^\vee\cap M]\rightarrow K[M(\sigma)]$ be the morphism corresponding to the closed embedding $O(\sigma)\rightarrow U_\sigma$. In the course of the proof of Proposition \ref{mProp:linear stratification} we showed that the image $I_\sigma=\phi(I\cap K[\sigma^\vee\cap M])$ is prime in $K[M(\sigma)]$ (it is linear with respect to some basis). Therefore, the preimage of $I_\sigma$ in $K[\sigma^\vee\cap M]$, which is equal to $\ker{\phi}+I\cap K[\sigma^\vee\cap M]$, is prime, too. Choose a basis $e_1^*,e_2^*,\dots,e_n^*$ of $N$ with dual basis $e_1,\dots,e_n\in M$ such that $e_1^*=u_\sigma$. With these coordinates, $K[\sigma^\vee\cap M]=K[\chi^{e_1},\chi^{\pm e_2},\dots, \chi^{\pm e_n}]$ and $\ker(\phi)=\langle \chi^{e_1} \rangle$. Consequently, the ideal in the coordinate ring $K[\sigma^\vee\cap M]/I\cap K[\sigma^\vee\cap M]$ of $\overline Z $ cutting out $\overline Z\cap O(\sigma)$ is generated by $\chi^{e_1}$. Hence, $\mathcal{O}_{\overline{Z}\cap O(\sigma),\overline{Z}}$ is a discrete valuation ring whose maximal ideal is generated by $\chi^{e_1}$. Writing
\begin{align*}
m=\langle m, u_\sigma\rangle e_1+\sum\limits_{i=2}^n\langle m, e_i^*\rangle e_i
\end{align*}
and noting that $\chi^{e_i}$ is a unit in $\mathcal{O}_{\overline{Z}\cap O(\sigma),\overline{Z}}$ for $i\geq 2$
we obtain $\ord_ {\overline{Z}\cap V(\sigma)}(\chi^m)=\langle m, u_\sigma\rangle$.

Because $\chi^m$ is an invertible regular function on $Z$, the divisor $\divv(\chi^m)$ is a linear combination of prime divisors contained in $\overline Z\setminus Z$. But this is exactly the union of the varieties $\overline Z\cap V(\sigma)$ for rays $\sigma\in\widetilde\Sigma$, which yields the ``in particula'' statement.
\end{proof}

\begin{rem}
Corollary \ref{mCor:Chow group generators} and Lemma \ref{mLem:ChowGroupRelations} show us that there is a strong analogy between the Chow groups of toric varieties and those of admissible compactifications of linear subvarieties of tori. The Chow groups of both toric varieties and their linear subvarieties have generators corresponding to cones of a fan. Furthermore, in both cases we get a relation between those generators for all cones $\tau$ in the respective fan and $m \in M(\tau)$: In the toric case we take the divisor of the rational function $\chi^m$ on the toric variety $V(\tau)$, whereas in the case of linear varieties we take the divisor of the rational function $\chi^m$ on the linear variety
$\overline Z \cap V (\tau)$. A comparison between the formula given in Section \ref{subsec:Minkowski weights} for the divisor in the toric case, and the formula from the preceding lemma for the linear case shows that they are completely analogous. The main difference between the two cases is that the torus action on the toric variety assures that all relations between the generators are of the given form \cite[Thm.\ 1]{FMSS95}, whereas this is no longer clear for linear subvarieties of toric varieties. However, by explicitly describing the duals of the Chow groups by Minkowski weights we will see in Corollary \ref{mCor:Chow groups of Z modulo torsion} that up to potential torsion this is indeed true for linear varieties as well.
\end{rem}

To describe the Chow cohomology of an admissible compactification $\overline Z$ in a toric variety $X_\Sigma$ we proceed similarly to Fulton and Sturmfels in \cite{FS97}. Given a cocycle $c\in A^k(\overline Z)$, we will first apply the Kronecker duality homomorphism $\mathcal D_Z:A^k(\overline Z)\rightarrow \Hom(A_k(\overline Z),\mathds Z)$ which assigns to $c$ the morphism mapping $\alpha\in A_k(\overline Z)$ to $\deg(c\cap\alpha)$. The Chow group $A_k(\overline Z)$ is generated by the cycles $[\overline Z\cap V(\sigma)]$ for $\sigma\in\widetilde\Sigma^{(k)}$, hence $\mathcal D_Z(c)$ is uniquely determined by its images on those cycles. Let us denote the induced map $\widetilde\Sigma^{(k)}\rightarrow \mathds Z$ by $\mathcal I_{Z}(c)$. For every $\tau\in\widetilde\Sigma^{(k+1)}$, the subvariety $\overline Z\cap V(\tau)$ is an admissible compactification of the linear variety $\overline Z\cap O(\tau)$ by Lemma \ref{mLem:Intersection and Closure Commute}, Proposition \ref{mProp:linear stratification},
 and Corollary \ref{mCor:Tropiclization and is local}. Consequently, we know that
 \begin{equation*}
 \sum_{\mathclap{\sigma:\tau\leq\sigma\in\widetilde\Sigma^{(k)}}}~~\langle m,u_{\sigma,\tau}\rangle [\overline Z\cap V(\sigma)] =0 \quad\text{in } A_k(\overline Z)
 \end{equation*}
for all $m\in M(\tau)$ by Lemma \ref{mLem:ChowGroupRelations} (remember that $u_{\sigma,\tau}$ denotes the lattice normal vector of $\sigma$ with respect to $\tau$). As noted earlier in Section \ref{subsec:Minkowski weights}, these relations are dual to the balancing
condition.  We conclude that $\mathcal I_{Z}(c)$ actually is a Minkowski weight on $\widetilde\Sigma$, that is an element in $M^k(\widetilde\Sigma)$. This shows that our construction really yields a morphism of Abelian groups
\begin{align*}
\mathcal I_{Z}:A^*(\overline Z)\rightarrow M^*(\widetilde\Sigma)
\end{align*}
analogous to the one constructed by Fulton and Sturmfels for complete toric varieties, and, in fact, we recover the isomorphism of \cite{FS97} if we take $Z=T$. But we do not know yet that the image $\mathcal I_Z(c)$ of a cocycle $c\in A^*(\overline Z)$ completely determines $c$. This will be ensured by the next proposition.

\begin{prop}
\label{mProp:Kronecker map is Isomorphism}
Let $\Sigma$ be admissible for a linear subvariety $Z\subseteq T$. Then the Kronecker duality map $\mathcal D_Z$ is an isomorphism. In particular, $\mathcal I_Z$ is injective.
\end{prop}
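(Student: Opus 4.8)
The plan is to show that $\overline Z$ is a \emph{linear variety} in the sense of Totaro \cite{Tot95} and then to invoke his theorem that for such a variety the Kronecker duality homomorphism is an isomorphism. Recall that the class of linear varieties is the smallest class of reduced finite-type $K$-schemes containing all affine spaces $\mathds A^n$ and closed under the operations: (i) if $W$ is a closed linear subvariety of a linear variety $X$, then $X\setminus W$ is linear; and (ii) if $W$ is a closed subscheme of a scheme $X$ such that both $W$ and $X\setminus W$ are linear, then $X$ is linear. From (ii) one deduces at once that a finite disjoint union of linear varieties is linear, and, by induction on the number of subspaces, that a finite union of affine-linear subspaces of $\mathds A^n$ is a linear variety (all of their mutual intersections being again affine-linear subspaces).

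The first step is to observe that every linear subvariety $Z'$ of a torus $T'$ is a linear variety. Picking a basis of the character lattice with respect to which the vanishing ideal of $Z'$ is linear identifies $T'$ with $(\mathds G_m)^n\subseteq\mathds A^n$ so that $Z'$ is cut out from $(\mathds G_m)^n$ by affine-linear equations; hence $Z'=L\setminus\bigcup_{i=1}^n(L\cap\{x_i=0\})$, where $L\subseteq\mathds A^n$ is the affine-linear subspace defined by those equations (if $L=\emptyset$ there is nothing to show). Since $\bigcup_i(L\cap\{x_i=0\})$ is a finite union of affine-linear subspaces of $L$, it is a closed linear subvariety of $L$, and therefore $Z'$ is linear by operation (i).

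The second step uses the orbit stratification. By Corollary \ref{mCor:Stratification passes to Z}, $\overline Z$ is stratified by the locally closed subsets $\overline Z\cap O(\sigma)$ for $\sigma\in\widetilde\Sigma$, and by Proposition \ref{mProp:linear stratification} together with the first step each such stratum is a linear variety. Moreover, Lemma \ref{mLem:Intersection and Closure Commute} and the orbit decomposition of $V(\sigma)$ give $\overline Z\cap V(\sigma)=\coprod_{\tau\geq\sigma}\overline Z\cap O(\tau)$, so for $0\leq j\leq\dim Z$ the closed subset $X_j:=\bigcup_{\dim\sigma\geq j}\overline Z\cap V(\sigma)$ equals $\coprod_{\dim\tau\geq j}\overline Z\cap O(\tau)$. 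We thus obtain a filtration $\overline Z=X_0\supseteq X_1\supseteq\dots\supseteq X_{\dim Z}$ by closed subschemes in which $X_j\setminus X_{j+1}=\coprod_{\dim\tau=j}\overline Z\cap O(\tau)$ is a finite disjoint union of linear varieties, hence linear. Descending induction on $j$ using operation (ii) then shows that every $X_j$, and in particular $\overline Z=X_0$, is a (complete) linear variety, so $\mathcal D_Z$ is an isomorphism by \cite{Tot95}. Finally, $\mathcal I_Z$ is injective because, by Corollary \ref{mCor:Chow group generators}, the cycles $[\overline Z\cap V(\sigma)]$ with $\sigma\in\widetilde\Sigma^{(k)}$ generate $A_k(\overline Z)$, so $\mathcal D_Z(c)$ is determined by its values on them, which are precisely the values of $\mathcal I_Z(c)$.

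The only point requiring real care is the inductive argument that a variety filtered by closed subsets whose successive strata are linear is itself linear; for this one needs each stratum closure $\overline{\overline Z\cap O(\sigma)}$ to contain only strata of dimension at most $\dim(\overline Z\cap O(\sigma))$, which is exactly what Lemma \ref{mLem:Intersection and Closure Commute} furnishes. Everything else — Proposition \ref{mProp:linear stratification} and Totaro's duality theorem — is already available.
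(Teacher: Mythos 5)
Your proposal is correct and takes essentially the same route as the paper: exhibit $\overline Z$ as a linear variety in the sense of Totaro via the orbit stratification of Corollary \ref{mCor:Stratification passes to Z}, using Proposition \ref{mProp:linear stratification} to see that each stratum is the complement of a union of affine subspaces in an affine subspace, and then apply Totaro's duality theorem. The only difference is that you spell out explicitly (via the filtration by the closed sets $X_j$) the closure property ``a variety stratified by linear varieties is linear,'' which the paper simply cites from Totaro.
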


\begin{proof}
The statement follows from Theorem 2 of Burt Totaro's paper \cite{Tot95}, which says that the
Kronecker duality map is an isomorphism for all complete varieties (or rather schemes) of a
certain type. The type of schemes considered by Totaro is that of what he calls linear schemes.
The class of schemes which are linear in his sense is defined recursively. We do not want to
go into the details here. In our setting it is sufficient to note that schemes stratified by linear
schemes are again linear and that the complement of a union of affine subspaces in an ambient
affine space is linear \cite[p. 5]{Tot95}. By Corollary \ref{mCor:Stratification passes to Z}, $\overline Z$ is stratified by $\{\overline Z \cap O(\sigma) \mid \sigma\in\widetilde\Sigma\}$. Each of the strata $\overline Z \cap O(\sigma)$ is cut out by equations which are linear in some coordinates and is hence isomorphic to the intersection of an affine subspace $L$ of some $\mathds A^k$ with the $k$-dimensional torus $T' \subseteq \mathds A^k$. This can also be considered as the complement of a finite union of affine subspaces of $L$ and therefore it is linear in the sense of Totaro. Consequently, $\overline Z$, too, is linear in Totaro's sense. Applying Totaro's theorem we see that $\mathcal D_Z$ is an isomorphism.
The ``in particular'' statement then follows directly from the construction of $\mathcal I_Z$.
\end{proof}

\begin{cor}
\label{mCor:A^0(overline Z)=mathds Z}
Let $\Sigma$ be admissible for a linear subvariety $Z \subseteq T$. Then $A^0(\overline Z)\cong \mathds Z$.
\end{cor}

\begin{proof}
Consider the composite morphism $A^0(\overline Z)\xrightarrow{\mathcal I_Z}M^0(\widetilde\Sigma)\hookrightarrow Z^0(\widetilde\Sigma)$, which is one-to-one by what we just saw. We have $|\widetilde\Sigma|=|\Trop(Z)|$ and the cycle $\Trop(Z)$ is, as it is a matroid variety, irreducible by \cite[Lemma 2.4]{FR10}. Therefore, $Z^0(\widetilde\Sigma)$ is freely generated by $\Trop(Z)$. The tropical cycle $\Trop(Z)$ is represented by the Minkowski weight in $M^0(\widetilde\Sigma)$ having weight $1$ on all maximal cones. Noting that this is exactly the image of $1\in A^0(\overline Z)$ under $\mathcal I_Z$ finishes the proof.
\end{proof}

Now that we know that $\mathcal I_Z$ embeds the Chow cohomology of $\overline Z$ into the group of Minkowski weights on $\widetilde\Sigma$, our next goal is to show that $\mathcal I_Z$ even is an isomorphism. One of the key ingredients for this is the following theorem.

\begin{thm}
\label{mThm:Pullback is Tropicalization}
Let $\Sigma$ be a complete fan which is admissible for a linear subvariety $Z\subset T$, and let $X=X_\Sigma$ be its associated complete toric variety. Then the diagram
\begin{center}
\begin{tikzpicture}[auto]
\matrix[matrix of math nodes,row sep=1.1cm]{
|(AX)|A^*(X) &[1.2cm] |(MS)| M^*(\Sigma) \\
|(AZ)|A^*(\overline Z) & |(MwS)| M^*(\widetilde\Sigma), \\
};
\begin{scope}[->,font=\footnotesize]
\draw (AX)--node{$\mathcal I_T$} (MS);
\draw (AZ)--node{$\mathcal I_{Z}$} (MwS);
\draw (AX)--node{$i^*$} (AZ);
\draw (MS)--node{$\_\cdot\Trop(Z)$} (MwS);
\end{scope}
\end{tikzpicture}
\end{center}
where the vertical map on the right assigns to a Minkowski weight $c\in M^*(\Sigma)$ the Minkowski weight representing the intersection cycle $[c]\cdot\Trop(Z)$, is commutative.
\end{thm}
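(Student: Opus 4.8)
The plan is to verify commutativity of the square by evaluating both composites on the canonical generators of $A^*(X)$, namely the classes $[V(\tau)]$ of torus-orbit closures, and then checking that the resulting Minkowski weights on $\widetilde\Sigma$ agree cone-by-cone. Fix $\tau\in\Sigma$ with $\dim(\tau)=\ell$, so that $[V(\tau)]\in A_{n-\ell}(X)=A^\ell(X)$ (using smoothness of $X$ to identify cohomology with homology). On the one hand, $i^*[V(\tau)]$ is represented by the cycle $[\overline Z\cap V(\tau)]$ (the intersection is proper by admissibility, and we work with varieties, so no multiplicities intervene), and by Lemma \ref{mLem:Intersection and Closure Commute} this is $[\overline{\overline Z\cap O(\tau)}]$. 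Applying $\mathcal I_Z$ and using the very definition of $\mathcal I_Z$ via Kronecker duality, for $\sigma\in\widetilde\Sigma^{(k)}$ with $k=\dim(\overline Z\cap O(\tau))$ we get $\mathcal I_Z(i^*[V(\tau)])(\sigma)=\deg\bigl([\overline Z\cap V(\tau)]\cap[\overline Z\cap V(\sigma)]\bigr)$, which is nonzero only when $\tau\leq\sigma$ and $\sigma$ has the appropriate dimension, in which case it equals the intersection number of the two orbit closures inside $\overline Z$.

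On the other hand, $\mathcal I_T[V(\tau)]$ is, by the Fulton--Sturmfels description (which $\mathcal I_T$ reproduces when $Z=T$), the Minkowski weight on $\Sigma$ supported on $\Star_\Sigma(\tau)$ with weight $1$ on the rays of that star (i.e.\ the cones of $\Sigma$ covering $\tau$ in codimension one), balanced in the obvious way; equivalently it is the Minkowski-weight incarnation of the tropical fan $\Star_\Sigma(\tau)$ with trivial weights. Intersecting this with $\Trop(Z)$ and evaluating at $\sigma\supseteq\tau$ should, by the locality of the tropical intersection product (its support lies in the intersection of supports, and the product is computed in the star), reduce to the intersection number $[\Star_\Sigma(\tau)]\cdot[\Star_{\mathcal A}(\tau)]$ computed in $\Star_{\mathcal A}(\tau)$, where $\mathcal A$ is a unimodular tropical fan for $\Trop(Z)$ — and by Corollary \ref{mCor:Tropiclization and is local}, $\Star_{\mathcal A}(\tau)$ represents $\Trop(\overline Z\cap O(\tau))$. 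Thus both composites, evaluated at $(\tau,\sigma)$, compute the same local intersection number, once in algebraic geometry inside $\overline Z\cap V(\tau)$ and once tropically inside $\Star_{\mathcal A}(\tau)$.

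The main obstacle is precisely this last identification: matching the algebraic intersection number $\deg\bigl([\overline Z\cap V(\tau)]\cap[\overline Z\cap V(\sigma)]\bigr)$, computed in the compactification $\overline Z\cap V(\tau)$ of a linear subvariety of the torus $O(\tau)$, with the tropical intersection number of the corresponding stars. I would handle this by reducing to the base case $\tau=0$: replacing $X$ by $V(\tau)$, $Z$ by $\overline Z\cap O(\tau)$, and $\Sigma$ by $\Star_\Sigma(\tau)$ (which is admissible for $\overline Z\cap O(\tau)$ by the argument already used in Proposition \ref{mProp:linear stratification}, invoking \cite[Prop. 14.3]{Gub12}), so that it suffices to prove the identity $\mathcal I_Z(i^*c)=\mathcal I_T(c)\cdot\Trop(Z)$ only on top-dimensional classes, i.e.\ to show $\deg(i^*c\cap[\overline Z])=\deg\bigl((c\cup[\overline Z]_{\mathrm{trop}})\text{-part in top degree}\bigr)$ where $[\overline Z]$ on the right is exactly $\Trop(Z)$ viewed as a Minkowski weight on $\Sigma$. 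But this is now nothing more than the defining property of $\Trop(Z)$ in Proposition and Definition \ref{tPropdef:Tropicalization}: the Minkowski weight attached to the cycle class $[\overline Z]\in A^*(X)$ \emph{is} $\Trop(Z)$, the pull-back $i^*$ is dual to push-forward of cycles, and $\deg(i^*c\cap[\overline Z])=\deg(c\cap i_*[\overline Z])$ by the projection formula, which translates on Minkowski weights into the cup product $\mathcal I_T(c)\cup\mathcal I_T([\overline Z])=\mathcal I_T(c)\cup(\text{Minkowski weight of }\Trop(Z))$, and by \cite[Thm. 4.4]{Katz12}, \cite[Thm. 1.9]{Rau08} this cup product is the tropical intersection product $\mathcal I_T(c)\cdot\Trop(Z)$. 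Assembling the base case across all $\tau$ via the reduction then gives commutativity of the full square; the only genuine bookkeeping is checking that the stratawise reductions are compatible with $\mathcal I_Z$, which follows from functoriality of Kronecker duality and of the orbit stratification.
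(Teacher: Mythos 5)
Your base case is essentially the paper's and is fine: for $c$ of top degree $d=\dim Z$ one takes the Poincar\'e dual $t$ of $[\overline Z]$, applies the projection formula $\deg(i^*c\cap[\overline Z])=\deg(c\cap[\overline Z])=\deg((c\cup t)\cap[X])$, uses that $\mathcal I_T(t)$ represents $\Trop(Z)$ by Proposition~\ref{tPropdef:Tropicalization}, and invokes the Katz--Rau compatibility of the cup product of Minkowski weights with the tropical intersection product. The gap is in your reduction to this case. You evaluate on the generator dual to $[V(\tau)]$ and assert that $\mathcal I_T$ of this class is the weight-$1$ Minkowski weight supported on $\Star_\Sigma(\tau)$, so that both composites localize at cones $\sigma\geq\tau$. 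This is false: $\mathcal I_T$ of the cocycle dual to $[V(\tau)]$ is the weight $\sigma\mapsto\deg([V(\tau)]\cdot[V(\sigma)])$, computed by the fan displacement rule, and it is in general not supported on $\Star_\Sigma(\tau)$. Already in $\mathds P^1\times\mathds P^1$ with $\tau=\cone(e_1)$ the dual weight is $0$ on $\pm e_1$ and $1$ on $\pm e_2$, i.e.\ supported entirely off $\Star_\Sigma(\tau)$. For the same reason the proposed reduction ``replace $X$ by $V(\tau)$'' does not yield a top-degree statement: by the self-intersection formula the restriction of the class dual to $[V(\tau)]$ to $V(\tau)$ is the Euler class of the normal bundle, not the identity, and the weight of either composite at a cone $\sigma$ with $\tau\not\leq\sigma$ is simply not visible on $V(\tau)$.

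The reduction that works (and is the paper's) localizes at the \emph{evaluation} cone rather than at the generator: for arbitrary $c\in A^k(X)$ with $k<d$ and $\sigma\in\widetilde\Sigma^{(k)}$, the projection formula gives $\deg(i^*c\cap[\overline Z\cap V(\sigma)])=\deg(\kappa^*j^*c\cap[\overline Z\cap V(\sigma)])$ with $j:V(\sigma)\rightarrow X$ and $\kappa:\overline Z\cap V(\sigma)\rightarrow V(\sigma)$; since $\overline Z\cap V(\sigma)$ is an admissible compactification of the linear variety $\overline Z\cap O(\sigma)$ (Lemma~\ref{mLem:Intersection and Closure Commute}, Proposition~\ref{mProp:linear stratification}, Corollary~\ref{mCor:Tropiclization and is local}), this is exactly the top-degree case on $V(\sigma)$. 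One must then still check that $\mathcal I_{O(\sigma)}(j^*c)=\Star_{\mathcal I_T(c)}(\sigma)$ and use the locality of the tropical intersection product to identify $\Star_{\mathcal I_T(c)}(\sigma)\cdot\Star_{\Trop(Z)}(\sigma)$ with the weight of $\mathcal I_T(c)\cdot\Trop(Z)$ at $\sigma$ --- a step your proposal gestures at, but applied to the wrong cone. (A smaller issue: writing $i^*[V(\tau)]$ as the cycle $[\overline Z\cap V(\tau)]$ and then ``intersecting'' it with $[\overline Z\cap V(\sigma)]$ inside the possibly singular $\overline Z$ is not defined; keep one factor as a cocycle and use the projection formula instead.)
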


\begin{proof}
Because all maps involved are graded, it is sufficient to show the commutativity for homogeneous elements. Let $d$ be the dimension of $Z$, then for $k>d$ the $k$-th components of the vertical maps are both zero and hence the diagram commutes in degrees greater $d$. Now assume that $c\in A^d(X)$. Let $t\in A^{n-d}(X)$ be the cocycle on $X$ corresponding to $[\overline Z]$ by Poincaré duality, that is the unique cocycle with $t\cap [X]=[\overline Z]$. Then its associated tropical cycle $[\mathcal I_T(t)]=\Trop(Z)$ is the tropicalization of $Z$ (see Section \ref{subsec:initial ideals}). Identifying both, $M^n(\Sigma)$ and $M^d(\widetilde\Sigma)$, with $\mathds Z$, we get
\begin{align*}
\mathcal I_{Z}(i^*c) 
= \deg(i^*c\cap[\overline Z]) 
&=\deg(c\cap [\overline Z])=\\
&=\deg((c\cup t)\cap [X])
=\mathcal I_T(c\cup t)
=\mathcal I_T(c)\cdot\Trop(Z),
\end{align*}
where the second equality uses the projection formula, and the last  equality uses that the ring structure of the Chow cohomology of complete toric varieties is compatible with tropical intersection products.

Now assume $c\in A^k(X)$ for some $k< d$ and let $\sigma\in\widetilde\Sigma^{(k)}$. By definition, the weight of $\mathcal I_Z(i^*c)$ at $\sigma$ is equal to $\deg(i^*c\cap[\overline Z\cap V(\sigma)])$. Denoting the inclusion $V(\sigma)\rightarrow X$ by $j$, this is equal to $\deg(j^*c\cap[\overline Z\cap V(\sigma)])$ by the projection formula. By Lemma \ref{mLem:Intersection and Closure Commute}, Proposition \ref{mProp:linear stratification}, and Corollary \ref{mCor:Tropiclization and is local}, we  know that $\overline Z\cap V(\sigma)$ is an admissible compactification of the linear variety $\overline Z\cap O(\sigma)$. So if we denote the inclusion $\overline Z\cap V(\sigma)\rightarrow V(\sigma)$ by $\kappa$, then the $k=d$ case applied to $j^*c$ yields
\begin{align*}
\deg(j^*c\cap[\overline Z\cap V(\sigma)])
&=
\deg(\kappa^*(j^*c)\cap[\overline Z\cap V(\sigma)])\\
&=
\mathcal I_{\overline Z\cap O(\sigma)}(\kappa^*(j^*c))\\
&=
\mathcal I_{O(\sigma)}(j^*c)\cdot\Trop(\overline Z\cap O(\sigma)),\\
\end{align*}
where the first equality again follows by the projection formula and in the last two expressions we identify $M^k(\Star_{\widetilde\Sigma}(\sigma))$ with $\mathds Z$. Using the projection formula one easily sees that $\mathcal I_{O(\sigma)}(j^*c)$ is equal to $\Star_{\mathcal I_T(c)}(\sigma)$, and by Corollary \ref{mCor:Tropiclization and is local} we know that $\Trop(\overline Z \cap O(\sigma))=\Star_{\Trop(Z)}(\sigma)$.  The locality of the intersection product \cite[Lemma 1.2]{Rau08} then implies that the intersection product of $\Star_{\mathcal I_T(c)}(\sigma)$ and $\Star_{\Trop(Z)}(\sigma)$ is equal to the weight of $\mathcal I_T(c)\cdot\Trop(Z)$ at $\sigma$, which is exactly what we wanted to show.
\end{proof}

Now we are able to prove our main result.

\begin{proof}[Proof of Theorem \ref{iThm:I_Z is ring isomorphism}]
We have already constructed the morphism $\mathcal I_Z$ and have seen that it is injective in Proposition \ref{mProp:Kronecker map is Isomorphism}. To see that it is also surjective, let $\Delta$ be a complete unimodular fan containing $\Sigma$ (which exists by \cite[Thm. 2.8, p. 75]{E96} and \cite[Thm. 11.1.9]{CLS}). Because $|\Sigma|=|\Trop(Z)|$, the closure of $Z$ in $X_\Delta$ is equal to that in $X_\Sigma$. Let $i:\overline Z\rightarrow X_\Delta$ denote the inclusion map and consider the diagram 
\begin{center}
\begin{tikzpicture}[auto]
\matrix[matrix of math nodes,row sep=1.1cm]{
|(AX)|A^*(X_\Delta) &[1.2cm] |(MS)| M^*(\Delta) \\
|(AZ)|A^*(\overline Z) & |(MwS)| M^*(\Sigma) \\
};
\begin{scope}[->,font=\footnotesize]
\draw (AX)--node{$\mathcal I_T$} (MS);
\draw (AZ)--node{$\mathcal I_{Z}$} (MwS);
\draw (AX)--node{$i^*$} (AZ);
\draw (MS)--node{$\_\cdot\Trop(Z)$} (MwS);
\end{scope}
\end{tikzpicture}
\end{center}
which is commutative by Theorem \ref{mThm:Pullback is Tropicalization}. Since $Z$ is linear, the fan $\Sigma$ has support equal to that of a matroid variety. We have seen in Theorem \ref{tThm:Intersection with Tropicalization is surjective} and Corollary \ref{tCor:Ring structure on Minkowski weights} that in this situation the intersection product on $Z^*(\Trop(Z))$ induces a ring structure on $M^*(\Sigma)$ and that the morphism on the right is a surjective ring homomorphism. We also know that the upper horizontal map $\mathcal I_T$ is an isomorphism by \cite[Thm 2.1]{FS97}. Together, these facts imply that $\mathcal I_Z$ is a ring isomorphism.
\end{proof}

\begin{cor}
\label{mCor:Chow groups of Z modulo torsion}
Let $k\in\mathds N$, and $Z$ and $\Sigma$ as in Theorem \ref{iThm:I_Z is ring isomorphism}. Then the $k$-th Chow group $A_k(\overline Z)_{\mathds Q}=A_k(\overline Z)\otimes_{\mathds Z}\mathds Q$ with rational coefficients is canonically isomorphic to the quotient of the $\mathds Q$-vector space with basis $\{e_\sigma\mid\sigma\in \Sigma^{(k)}\}$ by the subspace spanned by the elements
\begin{equation*}
\sum_{\mathclap{\sigma:\tau\leq\sigma\in\Sigma^{(k)}}}~~ \langle m,u_{\sigma,\tau}\rangle e_\sigma
\end{equation*}
for $\tau\in\Sigma^{(k+1)}$ and $m\in M(\sigma)$, where $u_{\sigma,\tau}$ denotes the lattice normal vector of $\sigma$ with respect to $\tau$.
\end{cor}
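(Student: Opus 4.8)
The plan is to realize $A_k(\overline Z)_{\mathbb Q}$ as the target of a canonical surjection out of the proposed quotient, and then, by passing to $\mathbb Q$-duals, identify that surjection with the rationalization of $\mathcal I_Z$; the corollary then falls out of Theorem \ref{iThm:I_Z is ring isomorphism} together with Proposition \ref{mProp:Kronecker map is Isomorphism}. Concretely, write $W_k$ for the $\mathbb Q$-vector space with basis $\{e_\sigma\mid\sigma\in\Sigma^{(k)}\}$ and $R_k\subseteq W_k$ for the span of the displayed balancing elements. Since $|\Sigma|=|\Trop(Z)|$ we have $\widetilde\Sigma=\Sigma$, so Corollary \ref{mCor:Chow group generators} provides a surjection $W_k\twoheadrightarrow A_k(\overline Z)_{\mathbb Q}$ sending $e_\sigma\mapsto[\overline Z\cap V(\sigma)]$, and Lemma \ref{mLem:ChowGroupRelations} shows this map annihilates $R_k$; hence it descends to a surjection $\phi\colon W_k/R_k\twoheadrightarrow A_k(\overline Z)_{\mathbb Q}$. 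All spaces involved are finite-dimensional (Corollary \ref{mCor:Chow group generators} bounds $\dim_{\mathbb Q}A_k(\overline Z)_{\mathbb Q}$, and $W_k$ is finite-dimensional by construction), so it remains only to show that $\phi$ is injective, equivalently that the transpose $\phi^{\vee}$ is surjective.

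Next I would identify the two relevant $\mathbb Q$-duals. A linear functional on $W_k$ is a function $c\colon\Sigma^{(k)}\to\mathbb Q$, and it kills $R_k$ precisely when $\sum_{\sigma:\tau\le\sigma\in\Sigma^{(k)}}\langle m,u_{\sigma,\tau}\rangle\,c(\sigma)=0$ for every $\tau\in\Sigma^{(k+1)}$ and $m\in M(\tau)$ --- that is, precisely when $c$ satisfies the balancing condition. So $(W_k/R_k)^{\vee}$ is canonically $M^k(\Sigma)_{\mathbb Q}$ (recall $M^k(\widetilde\Sigma)=M^k(\Sigma)$ here). On the other side, Kronecker duality (Proposition \ref{mProp:Kronecker map is Isomorphism}) identifies $A_k(\overline Z)_{\mathbb Q}^{\vee}$ with $A^k(\overline Z)_{\mathbb Q}$ via $\mathcal D_Z$. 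I would then unwind definitions: for $c\in A^k(\overline Z)$, $(\phi^{\vee}\mathcal D_Z(c))(e_\sigma)=\mathcal D_Z(c)([\overline Z\cap V(\sigma)])=\deg(c\cap[\overline Z\cap V(\sigma)])=\mathcal I_Z(c)(\sigma)$, which says that under the above identifications $\phi^{\vee}$ is exactly $\mathcal I_Z\otimes\mathbb Q\colon A^k(\overline Z)_{\mathbb Q}\to M^k(\Sigma)_{\mathbb Q}$.

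Finally, by Theorem \ref{iThm:I_Z is ring isomorphism} the map $\mathcal I_Z$ is an isomorphism, so $\phi^{\vee}$ is an isomorphism, whence $\phi$ is an isomorphism as well; and the resulting identification $A_k(\overline Z)_{\mathbb Q}\cong W_k/R_k$ is canonical since $\phi$ was built cone by cone from the classes $[\overline Z\cap V(\sigma)]$. I do not expect a genuinely hard step: the statement is essentially the "Poincaré-dual" reformulation of the main theorem, and no new geometric input is needed. The one point that demands care --- and the closest thing to an obstacle --- is the bookkeeping in the middle paragraph: matching the span $R_k$ of the relations coming from Lemma \ref{mLem:ChowGroupRelations} with the exact system of linear equations cutting out $M^k(\Sigma)_{\mathbb Q}$ inside $\mathbb Q^{\Sigma^{(k)}}$, and verifying that $\phi^{\vee}$ is literally $\mathcal I_Z\otimes\mathbb Q$ rather than merely abstractly isomorphic to it, since that literal identification is what forces $\phi$ to be an isomorphism.
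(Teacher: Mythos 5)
Your proof is correct and follows essentially the same route as the paper: dualize, identify the $\mathbb{Q}$-dual of the quotient with $M^k(\Sigma)_{\mathbb Q}$ via the balancing condition, and combine Theorem \ref{iThm:I_Z is ring isomorphism} with Proposition \ref{mProp:Kronecker map is Isomorphism}. The only difference is presentational — you build the canonical surjection $W_k/R_k\to A_k(\overline Z)_{\mathbb Q}$ explicitly from Corollary \ref{mCor:Chow group generators} and Lemma \ref{mLem:ChowGroupRelations} and check its transpose is $\mathcal I_Z\otimes\mathbb Q$, whereas the paper obtains the same map implicitly by dualizing the chain of isomorphisms; your version makes the "canonical" claim more transparent.
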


\begin{proof}
Denoting the vector space just described by $V=\Lin\{e_\sigma\mid \sigma\in\Sigma\}$, the dual $V^*$ of $V$ clearly is canonically isomorphic to $M^k(\Sigma)_{\mathds Q}$. This in turn is canonically isomorphic to $A^k(\overline Z)_{\mathds Q}$ by Theorem \ref{iThm:I_Z is ring isomorphism}. It follows from Proposition \ref{mProp:Kronecker map is Isomorphism} that this is isomorphic to $(A_k(\overline Z)_{\mathds Q})^*$. Dualizing the composite isomorphism $V^*\cong (A_k(\overline Z)_{\mathds Q})^*$ finishes the proof.
\end{proof}

As a final result we will prove that the intersection ring of $\Trop(Z)$ is isomorphic to the direct limit of the cohomology rings of admissible compactifications of $Z$.

\begin{cor}
\label{mCor:Tropical Intersection Theory is direct limit of algebraic intersection Theory}
Let $Z$ be a linear subvariety of the torus $T$ and let $\mathcal D$ be the directed set of unimodular fans in $\nr$ with support equal to $|\Trop(Z)|$. Then we have $\varinjlim A^*(\overline Z)\cong Z^*(\Trop(Z))$.
\end{cor}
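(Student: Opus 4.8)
The plan is to realize both sides as a direct limit over the directed set $\mathcal D$ and then apply Theorem \ref{iThm:I_Z is ring isomorphism} termwise. First I would observe that every $\Sigma\in\mathcal D$ is, in particular, an admissible fan for $Z$ (it is unimodular and $|\Trop(Z)|=|\Sigma|$ is trivially a union of cones of $\Sigma$) with $\widetilde\Sigma=\Sigma$, so Theorem \ref{iThm:I_Z is ring isomorphism} gives a canonical ring isomorphism $\mathcal I_Z\colon A^*(\overline Z)\xrightarrow{\ \sim\ }M^*(\Sigma)$, where here $\overline Z$ denotes the closure of $Z$ in $X_\Sigma$; note that by the discussion in Section \ref{mSec} this closure (and its cohomology ring) depends only on $|\Sigma|=|\Trop(Z)|$, not on the particular fan structure. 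For a refinement $\Sigma'\succeq\Sigma$ in $\mathcal D$ the induced toric morphism $X_{\Sigma'}\to X_\Sigma$ restricts to an isomorphism on the closures of $Z$ (both equal $\overline Z$), so there is a natural transition map $A^*(\overline Z)_{\Sigma}\to A^*(\overline Z)_{\Sigma'}$ which is in fact the identity under this identification; consequently $\varinjlim_{\mathcal D}A^*(\overline Z)$ is well-defined and the $\mathcal I_Z$'s assemble into an isomorphism $\varinjlim_{\mathcal D}A^*(\overline Z)\xrightarrow{\ \sim\ }\varinjlim_{\mathcal D}M^*(\Sigma)$.

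The second step is to identify $\varinjlim_{\mathcal D}M^*(\Sigma)$ with $Z^*(\Trop(Z))$. The transition maps in this directed system are the refinement maps $M^*(\Sigma)\hookrightarrow M^*(\Sigma')$ coming from the inclusions $M^*(\Sigma)\hookrightarrow Z^*(\Trop(Z))\hookleftarrow M^*(\Sigma')$ (these are ring homomorphisms for the intersection-product ring structure of Corollary \ref{tCor:Ring structure on Minkowski weights}, and compatible with refinement since intersection products of tropical cycles are independent of chosen fan structures). Thus the system $(M^*(\Sigma))_{\Sigma\in\mathcal D}$ is a directed system of subrings of the fixed ring $Z^*(\Trop(Z))$, and its colimit is simply the union $\bigcup_{\Sigma\in\mathcal D}M^*(\Sigma)\subseteq Z^*(\Trop(Z))$. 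It remains to see this union is everything: given any tropical cycle $C\in Z^k(\Trop(Z))$, choose a representative weighted fan; it admits a common refinement with some unimodular fan structure on $|\Trop(Z)|$, and refining further we may assume the fan $\Sigma$ is unimodular and the weighted fan representing $C$ has $\Sigma^{(k)}$ as its support-compatible fan structure — then $C$ is represented by an element of $M^k(\Sigma)$ with $\Sigma\in\mathcal D$. Hence $\varinjlim_{\mathcal D}M^*(\Sigma)=Z^*(\Trop(Z))$ as graded rings, and composing with the isomorphism from the first step yields $\varinjlim_{\mathcal D}A^*(\overline Z)\cong Z^*(\Trop(Z))$.

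The main obstacle, and the only real content beyond bookkeeping, is verifying that the transition maps on both sides are genuinely compatible: namely that the square relating $\mathcal I_Z$ for $\Sigma$ and for a refinement $\Sigma'$ commutes, with the left vertical map the (identity) pullback $A^*(\overline Z)_\Sigma\to A^*(\overline Z)_{\Sigma'}$ and the right vertical map the refinement $M^*(\Sigma)\hookrightarrow M^*(\Sigma')$. This follows by unwinding the definition of $\mathcal I_Z$ via $\sigma\mapsto\deg(c\cap[\overline Z\cap V(\sigma)])$ together with the compatibility of Gysin pullback with the toric morphism $X_{\Sigma'}\to X_\Sigma$ — essentially the same projection-formula argument used in the proof of Theorem \ref{mThm:Pullback is Tropicalization} to show that $\mathcal I_T$ intertwines refinement of Minkowski weights. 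Once this commutativity is in hand, passing to the colimit is formal and the isomorphism respects the ring structures because each $\mathcal I_Z$ does and each transition map does.
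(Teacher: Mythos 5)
Your identification of $\varinjlim_{\mathcal D}M^*(\Sigma)$ with $Z^*(\Trop(Z))$ is fine and matches the paper, but the first step contains a genuine error: it is \emph{not} true that for a refinement $\Sigma'$ of $\Sigma$ in $\mathcal D$ the toric morphism $X_{\Sigma'}\to X_{\Sigma}$ restricts to an isomorphism between the two closures of $Z$, nor that $\overline Z$ depends only on $|\Sigma|$. The remark in Section \ref{mSec} that you are invoking says that $\overline Z$ depends only on the subfan $\widetilde\Sigma$ of cones contained in $|\Trop(Z)|$, i.e.\ that adding cones \emph{outside} the tropicalization changes nothing; it says nothing about refining the fan structure \emph{on} $|\Trop(Z)|$, and refining does change $\overline Z$. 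Concretely, take $Z=T$ (the zero ideal is linear): then $\overline Z=X_\Sigma$, and a refinement produces a nontrivial blow-up. The same happens for proper linear $Z$: subdividing a two-dimensional cone $\sigma\in\widetilde\Sigma$ blows up the point $\overline Z\cap V(\sigma)$ on the surface stratum, so the new closure is a blow-up of the old one. Indeed, if the system $(A^*(\overline Z))_{\Sigma\in\mathcal D}$ were constant the corollary would force $A^*(\overline Z)\cong Z^*(\Trop(Z))$ for a single $\Sigma$, contradicting Theorem \ref{iThm:I_Z is ring isomorphism}, since $M^*(\Sigma)$ is in general a proper subring of $Z^*(\Trop(Z))$.

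Because of this, the compatibility of the two directed systems is not the formality you make it out to be, and your sketch of it inherits the false premise (you still call the left vertical map "the (identity) pullback"). The transition maps are the pullbacks $i^*$ along the proper birational morphisms $i:\overline Z'\to\overline Z$ induced by the identity on $N$, and the actual content of the proof is to show that $\mathcal I_Z\circ i^*$ equals refinement of Minkowski weights composed with $\mathcal I_Z$. Your appeal to the projection formula is the right starting point -- it reduces the claim to the push-forward identity $i_*\bigl[\overline Z'\cap V(\sigma)\bigr]=\bigl[\overline Z\cap V(\tau)\bigr]$ for cones $\sigma\in\Sigma'$, $\tau\in\Sigma$ of equal dimension with $\sigma\subseteq\tau$ -- but that identity still has to be proved. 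The paper does this by observing that $N_\sigma=N_\tau$, so the induced map $O(\sigma)\to O(\tau)$ is an isomorphism of tori carrying $\overline Z'\cap O(\sigma)$ into $\overline Z\cap O(\tau)$, and that both are irreducible of the same dimension; Lemma \ref{mLem:Intersection and Closure Commute} then transfers this to the closures. Without this step (or some substitute for it) your argument does not go through.
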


\begin{proof}
As an immediate consequence of the fact that every fan has a unimodular refinement, we get the equality $\varinjlim M^*(\Sigma)\cong Z^*(\Trop(Z))$.  For all $\Delta \in\mathcal D$ we have $M^*(\Delta)\cong A^*(\overline Z)$ by
Theorem \ref{iThm:I_Z is ring isomorphism}, where $\overline Z$ is the closure of $Z$ in $X_\Delta$. Hence it is only left to show that whenever $\Sigma\in\mathcal D$ refines $\Delta\in\mathcal D$, and $\overline Z'$ denotes the closure of $Z$ in $X_\Sigma$, the diagram
\begin{center}
\begin{tikzpicture}[auto]
\matrix[matrix of math nodes,row sep=1.1cm]{
|(AZ)|A^*(\overline Z) &[1.2cm] |(MD)| M^*(\Delta) \\
|(AZ')|A^*(\overline Z') & |(MS)| M^*(\Sigma), \\
};
\begin{scope}[->,font=\footnotesize]
\draw (AZ)--node{$\mathcal I_Z$} (MD);
\draw (AZ')--node{$\mathcal I_{Z}$} (MS);
\draw (AZ)--node{$i^*$} (AZ');
\draw (MD)--(MS);
\end{scope}
\end{tikzpicture}
\end{center}
where $i\colon \overline Z'\to\overline Z$ is the morphism induced by the identity on $N$, and the vertical arrow
on the right is the refinement of Minkowski weights, is commutative. This boils down to showing that whenever $\sigma\in\Sigma$ and $\tau\in\Delta$ are two cones of the same codimension such that $\sigma\subseteq\tau$, then $i_*([\overline Z' \cap V (\sigma)]) = [Z \cap V (\tau)]$. In this situation, we have $N_\sigma = N_\tau$ and
the restriction of $i$ to $\overline Z'\cap V (\sigma)$ is induced by the toric morphism $V (\sigma)\to V(\tau)$ coming
from the identity on $N(\sigma) = N(\tau)$. As $\overline Z \cap V (\sigma)$ is the closure of $Z \cap O(\sigma)$ by Lemma \ref{mLem:Intersection and Closure Commute}, it is sufficient to show that the push-forward of $[\overline Z'\cap O(\sigma)]$ under the induced map $j\colon O(\sigma)\to O(\tau)$ is equal to $[\overline Z \cap O(\tau)]$. But this is clear because $j$ is an isomorphism, $j(\overline Z' \cap O(\sigma)) \subseteq \overline Z \cap O(\tau)$, and $Z \cap O(\sigma)$ and $\overline Z' \cap O(\tau)$ are both irreducible of the same dimension.
\end{proof}

\bibliography{}

Andreas Gross, Fachbereich Mathematik, Technische Universität Kaiserslautern, Postfach 3049, 67653 Kaiserslautern, Germany \texttt{agross@mathematik.uni-kl.de}

\end{document}